\newtheorem{defn}{Definition}[section]
\newtheorem{lem}[defn]{Lemma}
\newtheorem{thm}[defn]{Theorem}
\newtheorem{prop}[defn]{Proposition}
\newtheorem{rem}[defn]{Remark}
\numberwithin{equation}{section}
\newcommand{\eps}{\varepsilon}
\let\ptd=\partial
\let\lan=\langle
\let\ran=\rangle
\def\bN{\mathbb{N}}
\def\bP{\mathbb{P}}
\def\bR{\mathbb{R}}
\def\bT{\mathbb{T}}
\def\bZ{\mathbb{Z}}
\def\cA{\mathcal{A}}
\def\cH{\mathcal{H}}
\begin{document}
\title{Probabilistic well-posedness for supercritical wave equation on $\bT^3$}
\author{Chenmin Sun}
\address{Laboratoire de Math\'ematiques, University Paris-Sud 11, F-91405.}
\email{chenmin.sun@u-psud.fr}
\author{Bo Xia}
\address{Laboratoire de Math\'ematiques, University Paris-Sud 11, F-91405.}
\email{bo.xia@math.u-psud.fr}
\thanks{The first author is supported by Master program by University of Paris-Saclay, while the second author is supported by CSC.}
\maketitle

\begin{abstract}
  In this article, we follow the strategies, listed in \cite{Burq2011} and \cite{OhPo}, in dealing with supercritical cubic and quintic wave equations, we obtain that, the equation
  \begin{equation*}
      \left\{
        \begin{split}
        &(\ptd^2_t-\Delta)u+|u|^{p-1}u=0,\ \ 3<p<5\\
        &\big(u,\ptd_tu\big)|_{t=0}=(u_0,u_1)\in H^{s}\times H^{s-1}=:\cH^s,
      \end{split}
      \right.
    \end{equation*}
 is almost surely global well-posed in the sense of Burq and Tzvetkov\cite{Burq2011} for any $s\in (\frac{p-3}{p-1},1)$. The key point here is that $\frac{p-3}{p-1}$ is much smaller than the critical index $\frac{3}{2}-\frac{2}{p-1}$ for $3<p<5$.
\end{abstract}

\section{Introduction}
 In this article, we are going to construct solutions for the equation
    \begin{equation}
      \label{eq:wave}
      \left\{
        \begin{split}
        &(\ptd^2_t-\Delta)u+|u|^{p-1}u=0,\ \ 3<p<5\\
        &\big(u,\ptd_tu\big)|_{t=0}=(u_0,u_1)\in H^{s}\times H^{s-1}=:\cH^s,
      \end{split}
      \right.
    \end{equation}
    where $u$ is a real-valued function defined on $\bT^3\times\bR_t$. Via a scaling argument, one can see that $s_{cr}=\frac{3}{2}-\frac{2}{p-1}$ is a critical index in solving Equation \eqref{eq:wave}. It turns out that for $s<s_{cr}$, Equation (\ref{eq:wave}) is ill-posed, while for $s\geq s_{cr}$, Equation (\ref{eq:wave}) is well-posed (in the sense of Hardamard) only for certain range of $s$. More precisely, we have
    \begin{thm}\label{thm.classical}
    The Cauchy problem \eqref{eq:wave} is locally well-posed for data in $\cH^s$ for $s>s_{cr}$. In the opposite direction, for $p\in[3,5)$, if $s\in (0,\frac{3}{2}-\frac{2}{p-1})$, then the equation \eqref{eq:wave} is not locally well-posed in $\cH^s$. One example contradicting the continuous dependence on the initial data is as follows: there exists a sequence $(u_n)$ of global smooth solutions of \eqref{eq:wave} such that
        \[
            \lim_{n\rightarrow \infty}\|(u^{(n)}_0,u^{(n)}_1)\|_{\cH^s}=0
        \]
    but
        \[
            \lim_{n\rightarrow \infty}\|(u_n(t),\ptd_tu_n(t))\|_{L^{\infty}([0,T];\cH^s)}=\infty,\ \ \forall T>0.
        \]
\end{thm}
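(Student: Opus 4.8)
This statement has a positive half (local well-posedness above the scaling index) and a negative half (failure of continuous dependence below it), which I would prove separately.

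\textbf{Local well-posedness for $s>s_{cr}$.} This is the standard subcritical Strichartz theory. Writing \eqref{eq:wave} in Duhamel form,
\[
  u(t)=\cos(t\sqrt{-\D})u_0+\frac{\sin(t\sqrt{-\D})}{\sqrt{-\D}}\,u_1-\int_0^t\frac{\sin((t-\tau)\sqrt{-\D})}{\sqrt{-\D}}\big(|u|^{p-1}u\big)(\tau)\,d\tau,
\]
I would run a contraction mapping in $X_T=C([0,T];\cH^s)\cap L^q([0,T];W^{\si,r})$ for a wave-admissible pair $(q,r)$ ($q,r\ge2$, $\tfrac2q+\tfrac2r\le1$) and a suitable $\si$, chosen so that Hölder's inequality and Sobolev embedding give a nonlinear bound $\big\||u|^{p-1}u\big\|_{(\text{retarded Strichartz})}\lesssim T^{\th}\|u\|_{X_T}^p$ with $\th>0$. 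The homogeneous and retarded Strichartz estimates for the $3$D wave operator on $\bR^3$ (Strichartz, Ginibre--Velo, Lindblad--Sogge) pass to $\bT^3$ locally in time by finite speed of propagation. The factor $T^{\th}$ — this is exactly why one works strictly above $s_{cr}$ — makes the map a contraction on a small ball of $X_T$ for $T=T(\|(u_0,u_1)\|_{\cH^s})>0$, and uniqueness, continuous dependence and persistence of regularity follow as usual. Since the equation is defocusing, the conserved energy $\int_{\bT^3}\big(\tfrac12|\ptd_tu|^2+\tfrac12|\nabla u|^2+\tfrac1{p+1}|u|^{p+1}\big)$ bounds $\|(u,\ptd_tu)\|_{\cH^1}$, so by the blow-up criterion smooth data give \emph{global} smooth solutions; this is what justifies the phrase ``global smooth solutions'' in the negative half.

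\textbf{Failure of well-posedness for $0<s<s_{cr}$.} I would follow the scaling/finite-speed construction of Lebeau and Christ--Colliander--Tao. Let $\phi$ be the periodic solution of $\phi''+|\phi|^{p-1}\phi=0$ with $\phi(0)=1$, $\phi'(0)=0$, and fix $t^*$ with $\phi'(t^*)\ne0$. Fix a radial cutoff $\chi$ with $\chi\equiv1$ on $B(0,L)$ and $\operatorname{supp}\chi\subset B(0,L+1)$, $L$ a large constant chosen later; for $a\to\infty$ set $\mu=a^{\be}$ and take
\[
  \big(u_0^{(a)},u_1^{(a)}\big)=\big(a\,\chi(\mu x),\,0\big).
\]
Since this data is $(a,0)$ on $B(0,L/\mu)$, finite speed of propagation and uniqueness force $u^{(a)}(t,x)=a\,\phi(a^{(p-1)/2}t)$ on the light cone $|x|\le L/\mu-t$; in particular, at $t_a:=t^*a^{-(p-1)/2}\to0$ one has $\ptd_tu^{(a)}(t_a,x)=a^{(p+1)/2}\phi'(t^*)$ for $|x|\le L/\mu-t_a$. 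Elementary Sobolev computations give $\|(u_0^{(a)},u_1^{(a)})\|_{\cH^s}=\|u_0^{(a)}\|_{H^s}\lesssim_L a\,\mu^{s-3/2}=a^{\,1-\be(3/2-s)}$, while the coherent plateau already forces $\|\ptd_tu^{(a)}(t_a)\|_{H^{s-1}}\gtrsim_L a^{(p+1)/2}\mu^{s-5/2}=a^{\,(p+1)/2-\be(5/2-s)}$. The interval $\big(\tfrac{2}{3-2s},\tfrac{p+1}{5-2s}\big)$ is nonempty \emph{precisely when} $s<s_{cr}=\tfrac{3p-7}{2(p-1)}$; choosing $\be$ there makes the first exponent negative and the second positive, so $\|(u_0^{(a)},u_1^{(a)})\|_{\cH^s}\to0$ while $\sup_{t\in[0,T]}\|(u^{(a)}(t),\ptd_tu^{(a)}(t))\|_{\cH^s}\to\infty$ for every $T>0$. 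As $u^{(a)}$ is supported in a ball of radius $O(L/\mu+t_a)\to0$, it descends to a global smooth solution on $\bT^3$ and the norms above are unchanged up to constants (for the index $s-1$ one uses that the modes $1\le|k|\lesssim\mu/L$ already give $\|\mathbf 1_{B(0,L/\mu)}\|_{H^{s-1}(\bT^3)}\gtrsim(L/\mu)^{5/2-s}$).

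\textbf{The main obstacle.} The delicate point is the lower bound $\|\ptd_tu^{(a)}(t_a)\|_{H^{s-1}}\gtrsim_L a^{(p+1)/2}\mu^{s-5/2}$. Besides the plateau, $\ptd_tu^{(a)}(t_a)$ has a remainder $h$, supported (like the transition zone of $\chi$) in a ball of radius $O(L/\mu)$, about which one has no phase information and only the crude bound $\|h\|_{L^2}\lesssim L\big(a\mu^{-1/2}+a^{(p+1)/2}\mu^{-3/2}\big)$ from conservation of energy; one must rule out that $h$ cancels the plateau. This uses: (i) since $s-1<0$ and $h$ lives in a ball of radius $O(L/\mu)$, splitting the frequency sum at $|k|\sim\mu/L$ and bounding low modes via $\|\widehat h\|_{\ell^\infty}\le\|h\|_{L^1}\lesssim(L/\mu)^{3/2}\|h\|_{L^2}$ yields $\|h\|_{H^{s-1}}\lesssim(L/\mu)^{1-s}\|h\|_{L^2}\lesssim L^{2-s}\big(a\mu^{s-3/2}+a^{(p+1)/2}\mu^{s-5/2}\big)$; and (ii) the plateau contributes $\gtrsim L^{5/2-s}a^{(p+1)/2}\mu^{s-5/2}$, which dominates this remainder once $L$ is taken large (a threshold depending only on $p,s,\phi,t^*$), using $\be<(p-1)/2$ to discard the subleading $a\mu^{s-3/2}$ term. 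Everything else — the Strichartz/contraction step, the energy estimate, and the transfer between $\bR^3$ and $\bT^3$ by finite speed of propagation — is routine.
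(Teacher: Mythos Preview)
The paper does not actually prove this theorem; immediately after the statement it simply refers the well-posedness half to Lindblad--Sogge together with Kapitanski's Strichartz estimates on compact manifolds, and the ill-posedness half to Burq--Tzvetkov. Your proposal is precisely a fleshed-out version of those cited arguments --- a Strichartz contraction with a subcritical gain $T^{\th}$ for the positive half, and the Lebeau/Christ--Colliander--Tao scaling construction with the ODE profile $\phi$ and a large plateau parameter $L$ for the negative half --- and the details you supply (in particular the separation of the plateau from the annular remainder $h$ in $H^{s-1}$ by the $L^{5/2-s}$ versus $L^{2-s}$ comparison, which works because $h$ is supported in an annulus of volume $\sim L^2\mu^{-3}$ so that $\|h\|_{L^2}\lesssim L(a\mu^{-1/2}+a^{(p+1)/2}\mu^{-3/2})$) are correct.
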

The well-posedness part of Theorem \ref{thm.classical} can be proved as in the work by Lindblad-Sogge\cite{Lindblad1995}, by invoking the Strichartz estimate on compact manifold due to Kapitanski\cite{kap90}. For the special case $p=3$, the equation \eqref{eq:wave} is even globally well-posed if the regularity index $s$ is sufficiently close to $1$, for the Euclidean case one can refer to works by Roy\cite{T.Roy2008}. For the ill-posedness statement of Theorem \ref{thm.classical}, one can see Burq-Tzvetkov\cite{Burq2007a}.\\

In order to overcome such ill-posedness, probabilistic tools have been introduced, by which we can construct locally and even globally well-posed solutions to several supercritical equations. This approach was first used by Bourgain\cite{Bourgain1994,Bourgain1996} to prove the invariance of Gibbs measure, introduced by Lebowitz-Rose-Speer in \cite{lebowitz1988statistical}, under the flow of the periodic nonlinear Schr\"odinger equation or 2D-defocusing nonlinear Schr\"odinger equation. By this invariance, Bourgain obtained that these equations are almost surely globally well-posed on the support of this measure. On the other hand, by randomizing the initial data via its Fourier series and a consideration of invariant measure in \cite{Burq2007a}\cite{Burq2007}, Burq-Tzvetkov proved that the cubic wave equations on the 3D unit ball are locally and globally well-posed; they also proved the local and global well-posedness of the cubic wave equation on 3D torus by a conservation law argument in \cite{Burq2011}. Using the similar argument, Burq-Thomann-Tzvetkov obtained the global existence of the cubic wave equation in higher dimension in \cite{burq2012global}. Recently Oh-Pocovnicu, by using the Wiener randomization, proved the quintic wave equation on $\bR^3$ is almost surely global well-posed with the initial data in the homogeneous space $\dot{\cH}^s(\bR^3):=\dot{H}^{s}(\bR^3)\times\dot{H}^{s-1}(\bR^3)$ with $s>\frac{1}{2}$. \\

In this article, we are going to construct solutions to Equation (\ref{eq:wave}), with $3<p<5$. And we obtained that as long as $s>\frac{p-3}{p-1}$, Equation \eqref{eq:wave} is almost surely globally well-posed. 

\begin{thm}[Almost sure global well-posedness]
  \label{sec:introduction:thm:main}
  Let $s\in(\frac{p-3}{p-1},1)$. Given $(u_0,u_1)\in\cH^s(\bT^3)$, let $(u_0^{\omega},u_1^{\omega})$ be the randomization as in (\ref{data.rando}) under the assumption (\ref{main.assum}). Then the super-critical wave equation (\ref{eq:wave}) is almost surely globally well-posed with $(u^{\omega}_0,u^{\omega}_1)$ as the initial data. More precisely, there exists a set $\Omega_{(u_0,u_1)}\subset\Omega$ of probability $1$ such that, for every $\omega\in\Omega_{(u_0,u_1)}$, there exists a unique solution $u$ (in a bounded ball around zero) to (\ref{eq:wave}) in the class:
  \begin{equation*}
    \big(S(t)(u^{\omega}_0,u^{\omega}_1),\ptd_tS(t)(u^{\omega}_0,u^{\omega}_1)\big)+C(\bR;\cH^1(\bT^3))\subset C(\bR;\cH^s(\bT^3)).
  \end{equation*}
\end{thm}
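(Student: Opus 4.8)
The plan is to follow the Burq–Tzvetkov paradigm: write the solution as $u = z + v$, where $z(t) := S(t)(u_0^\omega, u_1^\omega)$ is the free evolution of the randomized data (the ``linear'' rough part), and $v$ is the nonlinear remainder, which we expect to live in the smoother space $C(\bR; \cH^1)$. Plugging into \eqref{eq:wave}, $v$ solves the forced wave equation
\[
  (\ptd_t^2 - \Delta) v + |v + z|^{p-1}(v+z) = 0, \qquad (v, \ptd_t v)|_{t=0} = (0,0),
\]
so that $v(t) = -\int_0^t \frac{\sin((t-\tau)\sqrt{-\Delta})}{\sqrt{-\Delta}} \big( |v+z|^{p-1}(v+z) \big)(\tau)\, d\tau$. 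The first step is the \emph{probabilistic input}: using the randomization \eqref{data.rando} under \eqref{main.assum}, together with the standard $L^q_\omega$ bounds on Gaussian-type series (Khinchin / hypercontractivity) and the Strichartz estimates on $\bT^3$ used in Theorem \ref{thm.classical}, one shows that almost surely $z \in L^q_{loc}(\bR; L^r(\bT^3)) \cap C(\bR; \cH^s)$ for a suitable admissible pair $(q,r)$; the gain over the deterministic theory is that randomization effectively improves the integrability of the free solution, which is exactly what buys the room between $\frac{p-3}{p-1}$ and $s_{cr}$.

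The second step is the \emph{deterministic local theory for $v$} at regularity $\cH^1$. Here I would set up a contraction in a space like $X_T := C([0,T]; \cH^1) \cap L^{q_1}([0,T]; W^{1,r_1})$ for an admissible Strichartz pair, applying the energy/Strichartz estimates for the wave equation on $\bT^3$ to the Duhamel term. The nonlinearity is estimated by expanding $|v+z|^{p-1}(v+z)$: the worst term is the purely linear one, $|z|^{p-1}z$, which must be controlled using only the probabilistic regularity of $z$ (it is \emph{not} in $\cH^1$), while the mixed terms and the pure-$v$ term $|v|^{p-1}v$ are handled by Hölder, Sobolev embedding $H^1(\bT^3) \hookrightarrow L^6$, and the smallness of $T$. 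Because $3 < p < 5$ the exponents are subcritical relative to $\cH^1$, so for fixed $z$ the map is a contraction on a ball in $X_T$ for $T$ small depending on $\|z\|$ on the time interval; this yields local existence and uniqueness of $v$, hence of $u$ in the stated class.

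The third step is \emph{globalization}. Since $v$ solves a wave equation with a source, the natural conserved-type quantity is the energy $E(v)(t) = \frac12 \|\ptd_t v\|_{L^2}^2 + \frac12 \|\nabla v\|_{L^2}^2 + \frac{1}{p+1}\|v\|_{L^{p+1}}^{p+1}$, which is \emph{not} conserved because of the coupling to $z$; differentiating gives $\frac{d}{dt} E(v) = -\int \ptd_t v \,\big( |v+z|^{p-1}(v+z) - |v|^{p-1}v \big)\, dx$, and one bounds the right-hand side by $C\,\|\ptd_t v\|_{L^2}\big(\|z\|_{L^{p+1}}^{?} + \cdots\big)$ times powers of $E(v)$, using the elementary inequality $\big||a+b|^{p-1}(a+b) - |a|^{p-1}a\big| \lesssim |b|\big(|a|^{p-1} + |b|^{p-1}\big)$ together with Hölder. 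A Gronwall argument, using that $\int_0^T \|z(\tau)\|^{\cdot}\, d\tau < \infty$ a.s. on every finite interval, then shows $E(v)(t)$ cannot blow up in finite time, so the local solution extends globally; this is the argument of \cite{Burq2011}. I expect the \textbf{main obstacle} to be the bookkeeping in Steps 1–2: choosing one Strichartz pair $(q,r)$ for which (a) the probabilistic estimate places $|z|^{p-1}z$ in $L^1_t L^2_x$ (or $L^1_t H^{-1}_x$) \emph{precisely} when $s > \frac{p-3}{p-1}$, and (b) the same pair closes the $\cH^1$ contraction for $v$ for the full range $3 < p < 5$ — reconciling these two constraints, and in particular checking that the energy-increment estimate in Step 3 only ever needs norms of $z$ that are a.s. finite, is where the real work lies.
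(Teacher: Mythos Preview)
Your Steps 1 and 2 are essentially right and match the paper. The real gap is in Step 3, and it is not ``bookkeeping'': the naive Gronwall you sketch does \emph{not} close for any $p>3$. After the pointwise bound $\big||v+z|^{p-1}(v+z)-|v|^{p-1}v\big|\lesssim |z|\,|v|^{p-1}+|z|^{p}$, the dangerous term is
\[
\int_{\bT^3}\ptd_t v\cdot z\,|v|^{p-1}\,dx.
\]
With $\|\ptd_t v\|_{L^2}\lesssim E^{1/2}$, the best you can do with H\"older is to put $|v|^{p-1}$ in $L^{(p+1)/(p-1)}$ (using the $L^{p+1}$ part of the energy), which costs $E^{(p-1)/(p+1)}$; the total power of $E$ is then $\tfrac12+\tfrac{p-1}{p+1}=\tfrac{3p-1}{2(p+1)}$, which exceeds $1$ as soon as $p>3$. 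Using Sobolev $H^1\hookrightarrow L^6$ instead only makes the exponent worse. So $\frac{d}{dt}E(v)\le C(z)\,E(v)^{\alpha}$ with $\alpha>1$, and Gronwall gives nothing. (For $p=3$ the exponent is exactly $1$, which is why the Burq--Tzvetkov argument you cite from \cite{Burq2011} works there and only there.)

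The paper's fix, following Oh--Pocovnicu, is twofold. First, integrate by parts \emph{in time}: write $p\int_0^t\!\!\int \ptd_t v\,z\,|v|^{p-1}=\int_0^t\!\!\int z\,\ptd_t(|v|^{p-1}v)$ and move the $\ptd_t$ onto $z$; the boundary term $\int z(t)|v(t)|^{p-1}v(t)$ is absorbed by Young's inequality with a small constant. This leaves $\int_0^t\!\!\int (\ptd_t z)\,|v|^{p-1}v$, where $\ptd_t z=\lan\nabla\ran\tilde z$ is one derivative rougher than $z$. Second --- and this is the heart of the paper (Lemma~\ref{lem:bound:I2}) --- this integral is bounded by $(\|\lan\nabla\ran^{s-}\tilde z\|_{L^\infty_x}+1)\,E(v)$ via a Littlewood--Paley decomposition combined with a splitting of $|v|^{p-1}v$ into the regions $\{|v|\lesssim 2^{aj}\}$ and $\{|v|\gtrsim 2^{aj}\}$ at each dyadic frequency $2^j$. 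On the small-value piece one differentiates $|v|^{p-1}v$ and uses the cutoff to absorb the extra $|v|^{(p-3)/2}$; on the large-value piece one interpolates $\|P_{j_1}v\|_{L^{(p+1)/2}}\le\|P_{j_1}v\|_{L^2}^{2/(p-1)}\|P_{j_1}v\|_{L^{p+1}}^{(p-3)/(p-1)}$ and gains $2^{-2j/(p-1)}$ from Bernstein. Both sums converge precisely when $s>\tfrac{p-3}{p-1}$, which is where your threshold comes from --- not from the local theory.

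Finally, because $z$ is rough, the energy computation above is formal; the paper carries it out rigorously for the smooth truncated solutions $v_N$ (with $z_N=P_{\le N}z$), obtains a bound uniform in $N$, and then passes to the limit via a separate deterministic approximation argument (Proposition~\ref{prop:energy:v}). You should at least acknowledge this step.
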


\begin{rem}
  We should notice that the lower bound $\frac{p-3}{p-1}$ is compatible with the endpoint cases $p=3$ and $p=5$. That is to say, when $p$ tends to $3$, the minimal regularity required to solve Equation (\ref{eq:wave}) becomes the one obtained in \cite{Burq2011} for the case $p=3$; and the same for the other endpoint $p=5$, see \cite{OhPo}. But if $p=3$ and $s=0$, we refer to the \cite{Burq2011} for the possible growth of Sobolev norm.
\end{rem}
  
\begin{rem}
    For the corresponding equation on Euclidean space $\bR^3$, by a similar randomization of the initial data via a unit-scale decomposition in frequency space, L\"uhrmann-Mendelson\cite{Luhrmann2013} proved the solution is globally well-posed for $1>s>\frac{p^3+5p^2-11p-3}{9p^2-6p-3}$, which is an improvement to the classical deterministic theory only when $\frac{1}{4}(7+\sqrt{73})<p<5$. And recently, they improved this result to $1>s>\frac{p-1}{p+1}$ by using Oh-Pocovnicu's ideas in \cite{OhPo}. 
\end{rem}

\begin{rem}
	For higher dimension case $d\geq4$, the global infinite energy solution to the cubic wave equation was constructed by Burq-Thomann-Tzvetkov\cite{burq2012global}, where the conditionally continuous dependence on the initial data is left unknown. But Oh-Pocovnicu succeeded to prove this uniqueness result in \cite{ohpo15}.
\end{rem}

\section*{Acknowledgement}
We should thank  Nicolas Burq for his carefully advising
 when we were preparing this paper, and also we should thank him and N. Tzvetkov for sharing the manuscripts on their recent works on quartic wave equation in the supercritical case.
\section{Preliminaries}


\subsection{Deterministic Preliminaries}
In this section, we recall several classical results about the linear equation
  \begin{eqnarray}\label{wave.lin}
            \left\{
                \begin{split}
                    & (\ptd^2_t-\Delta)u = f \text{ on } I\times \bT^3, \\
                    & (u,\ptd_tu)|_{t=t_0}=(u_0,u_1).
                \end{split}
            \right.
  \end{eqnarray}
  We say that $u$ solves Equation \eqref{wave.lin} on the time interval $I\ni t_0$ if $u$ satisfies for $t\in I$ the Duhamel formula
  \begin{equation*}
    u(t)=S(t-t_0)(u_0,u_1)+\int_{t_0}^{t}\frac{\sin((t-t')\sqrt{-\Delta})}{\sqrt{-\Delta}}f(t')dt',
  \end{equation*}
  where $S(t)$ is the free wave propagator defined by
  \begin{equation*}
    S(t)(u_0,u_1)=\cos(t\sqrt{-\Delta})u_0+\frac{\sin(t\sqrt{-\Delta})}{\sqrt{-\Delta}}u_1.
  \end{equation*}
  We now recall the following energy estimates for the solution $u$ to Equation \eqref{wave.lin}.
\begin{prop}[Energy estimates]
     Suppose $u$ solves Equation \eqref{wave.lin} on $I=[0,T]$. Then for any $t\in [0,T]$ we have
         \[
             \|(u(t,\cdot),\partial_tu(t,\cdot))\|_{\mathcal{H}^s} \leq C(1+T)\Big(\|(u_0,u_1)\|_{\mathcal{H}^s}+ \int_0^t \|f(r,\cdot)\|_{H^{s-1}}dr\Big).
        \]
      \end{prop}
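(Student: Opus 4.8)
The plan is to reduce the whole estimate to a single bound for the free propagator,
\[
  \big\|S(\tau)(v_0,v_1)\big\|_{\cH^s}\le C\,(1+|\tau|)\,\big\|(v_0,v_1)\big\|_{\cH^s}\qquad(\tau\in\bR),
\]
and then to recover the inhomogeneous term directly from the Duhamel formula via Minkowski's integral inequality, using the linearity of \eqref{wave.lin} (no Gronwall argument is needed).

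First I would prove the displayed linear bound by Plancherel on $\bT^3$. If $\widehat{v_0}(n)$ and $\widehat{v_1}(n)$ are the Fourier coefficients, then the two components of $S(\tau)(v_0,v_1)$ and of $\ptd_t S(\tau)(v_0,v_1)$ act on the $n$-th mode as multiplication by
\[
  \cos(\tau|n|),\qquad \tfrac{\sin(\tau|n|)}{|n|},\qquad -|n|\sin(\tau|n|),\qquad \cos(\tau|n|),
\]
with the convention that $\tfrac{\sin(\tau|n|)}{|n|}$ means $\tau$ at $n=0$. The core of the argument is the elementary symbol bounds $|\cos(\tau|n|)|\le1$ and $\tfrac{|n|}{\lan n\ran}|\sin(\tau|n|)|\le1$ for all $n$, together with $\lan n\ran\cdot\tfrac{|\sin(\tau|n|)|}{|n|}\le C$ for $n\ne0$ (since $\lan n\ran\le\sqrt2\,|n|$ when $n\ne0$ on $\bT^3$), while this last quantity equals $|\tau|$ at $n=0$. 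Weighting the squares of the first two symbols by $\lan n\ran^{2s}$ and those of the last two by $\lan n\ran^{2(s-1)}$ and summing yields the claimed estimate, the factor $(1+|\tau|)$ entering only through the zero mode of $\tfrac{\sin(\tau\sqrt{-\Delta})}{\sqrt{-\Delta}}$.

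Next I would apply this with $(v_0,v_1)=(u_0,u_1)$ and $\tau=t$ to control the linear part $S(t)(u_0,u_1)$, and separately handle the Duhamel integral $v(t):=\int_0^t \tfrac{\sin((t-r)\sqrt{-\Delta})}{\sqrt{-\Delta}}f(r)\,dr$. Since $\tfrac{\sin((t-r)\sqrt{-\Delta})}{\sqrt{-\Delta}}f(r)=S(t-r)(0,f(r))$ and, differentiating under the integral on the Fourier side (the boundary contribution at $r=t$ vanishing because $\sin 0=0$), $\ptd_t v(t)=\int_0^t\cos((t-r)\sqrt{-\Delta})f(r)\,dr$, the pair $\big(v(t),\ptd_tv(t)\big)$ coincides with $\int_0^t S(t-r)(0,f(r))\,dr$ in $\cH^s$. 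Minkowski's integral inequality and the linear bound then give
\[
  \big\|\big(v(t),\ptd_tv(t)\big)\big\|_{\cH^s}\le \int_0^t C\big(1+|t-r|\big)\|f(r)\|_{H^{s-1}}\,dr\le C(1+T)\int_0^t\|f(r)\|_{H^{s-1}}\,dr,
\]
because $0\le t-r\le T$; a triangle inequality combining this with the analogous bound $C(1+T)\|(u_0,u_1)\|_{\cH^s}$ for $S(t)(u_0,u_1)$ finishes the proof.

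I expect the only subtle point to be the zero Fourier mode: on $\bT^3$ the operator $\sqrt{-\Delta}$ is not invertible, so $\tfrac{\sin(\tau\sqrt{-\Delta})}{\sqrt{-\Delta}}$ must be interpreted as multiplication by $\tau$ on constant functions, and it is precisely this mode that produces the unavoidable linear-in-$T$ growth and that one has to track when justifying the differentiation under the integral sign. The remaining steps are routine Plancherel computations.
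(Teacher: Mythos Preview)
Your argument is correct: the Plancherel computation for the free propagator is clean, the identification of the zero mode as the sole source of the $(1+T)$ growth is accurate, and the reduction of the Duhamel term to the homogeneous bound via $S(t-r)(0,f(r))$ together with Minkowski is the right way to close. The only notational wrinkle is that you write $\|S(\tau)(v_0,v_1)\|_{\cH^s}$ for what is really the pair norm $\|(S(\tau)(v_0,v_1),\ptd_tS(\tau)(v_0,v_1))\|_{\cH^s}$, but the subsequent text makes clear you are controlling both components.

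As for comparison with the paper: the paper does not actually supply a proof of this proposition. It is stated as a recalled classical fact (``We now recall the following energy estimates\ldots'') and left unproved, so there is no argument in the paper to contrast with yours. Your proof is the standard one and would serve perfectly well as the omitted justification.
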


And also, we use frequently the Strichartz estimate, which indicates the smoothing property of wave operator. In order to state this estimate, we first define the concept of "wave-admissibility" in $3$D case.
\begin{defn}
    We call a pair $(q,r)$ wave-admissible if $2\leq q\leq \infty, 2\leq r\leq \infty,(q,r)\neq(2,\infty),(q,r)\neq(\infty,2)$ and
    \[
        \frac{1}{q}+\frac{1}{r}\leq \frac{1}{2}
    \]
\end{defn}
\begin{prop}[Strichartz estimates for wave equation]\cite{Keel1998}\cite{kap90}
    Let $u$ be the solution to \eqref{wave.lin} on any time interval $0\in I\subset[0,1]$, we have
    \[
            \|u\|_{L^p(I;L^q(\bT^3))} \leq C\Big(\|(u_0,u_1)\|_{\cH^s} +\|f\|_{L^{a'}(I;L^{b'}(\bT^3))} \Big)
        \]
        under the assumptions that
        \begin{enumerate}
            \item wave admissible condition: both the pairs $(p,q)$ and $(a,b)$ are wave-admissible;
            \item Scaling invariant condition:
                \[
                    \frac{1}{p}+\frac{3}{q}=\frac{1}{a'}+\frac{3}{b'}-2=\frac{3}{2}-s.
                \]
        \end{enumerate}
\end{prop}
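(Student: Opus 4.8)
\noindent Since wave‑admissibility excludes $(2,\infty)$ -- the genuinely delicate double endpoint -- as well as $(\infty,2)$, the inequality stays in the range covered by the classical Euclidean Strichartz estimates of Keel--Tao, and the plan is to deduce it from those: the absence of any global‑in‑time dispersive decay on $\bT^3$ will be compensated by the shortness of the interval, $I\subset[0,1]$, together with finite speed of propagation. (On the flat torus this is especially transparent; on a general compact manifold one would instead invoke Kapitanski's short‑time parametrix for $\cos(t\sqrt{-\Delta})$.) First I would use the Duhamel formula recalled above to split the claim into (i) the homogeneous bound $\|S(t)(u_0,u_1)\|_{L^p(I;L^q(\bT^3))}\lesssim\|(u_0,u_1)\|_{\cH^s}$ and (ii) the inhomogeneous bound for the operator $f\mapsto\int_0^t\frac{\sin((t-t')\sqrt{-\Delta})}{\sqrt{-\Delta}}f(t')\,dt'$ from $L^{a'}(I;L^{b'})$ to $L^p(I;L^q)$. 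Writing $\cos$ and $\sin$ in terms of $e^{\pm it\sqrt{-\Delta}}$ and isolating the zero Fourier mode (which contributes only a polynomial‑in‑$t$ term, harmlessly bounded on $I$), both are reduced to bounds for the half‑wave group $U(t):=e^{it\sqrt{-\Delta}}$, chiefly the homogeneous estimate $\|U(t)g\|_{L^p(I;L^q(\bT^3))}\lesssim\|g\|_{H^s(\bT^3)}$ together with its dual and retarded (inhomogeneous) forms.

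For (i) I would take a Littlewood--Paley decomposition $U(t)g=\sum_N U(t)P_Ng$ (sum over dyadic $N$) and prove the fixed‑frequency estimate $\|U(t)P_Ng\|_{L^p(I;L^q(\bT^3))}\lesssim N^{s}\|P_Ng\|_{L^2(\bT^3)}$. Fix a finite cover of $\bT^3$ by unit‑size coordinate balls with a subordinate partition of unity; since $|I|\le1$ and waves travel at unit speed, finite speed of propagation identifies each localized piece of $U(t)P_Ng$ on $I$ with the restriction of a frequency‑$\sim N$ free wave on $\bR^3$ with suitably extended data. On $\bR^3$ these pieces obey the classical Euclidean Strichartz estimates, which follow from the frequency‑localized dispersive decay $\|e^{it\sqrt{-\Delta}}P_Nh\|_{L^\infty(\bR^3)}\lesssim N^{3}(1+N|t|)^{-1}\|P_Nh\|_{L^1(\bR^3)}$ together with $L^2$‑conservation, fed (after rescaling to unit frequency) into the Keel--Tao $TT^*$ machinery -- whose only genuinely excluded endpoint, $(2,\infty)$, is not in play here. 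Transferring back to $\bT^3$ and bounding the time‑local norm by the global‑in‑time one, one gets $\|U(t)P_Ng\|_{L^p(I;L^q)}\lesssim N^{\frac32-\frac3q-\frac1p}\|P_Ng\|_{L^2}$, and $\frac32-\frac3q-\frac1p=s$ by the scaling hypothesis. Since $p,q\ge2$, I then sum over $N$: for $q<\infty$, by the Littlewood--Paley square‑function inequality in $L^q_x$ and Minkowski's inequality in the remaining $\ell^2_N$ and $L^p_t$ norms, $\|U(t)g\|_{L^p(I;L^q)}\lesssim\big(\sum_N N^{2s}\|P_Ng\|_{L^2}^2\big)^{1/2}\sim\|g\|_{H^s}$ (the borderline case $q=\infty$, still admissible when $p>2$, handled by an extra dyadic summation exploiting the strict gap in the admissibility inequality).

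For (ii) I would run the Keel--Tao scheme once more: the two homogeneous estimates just established -- for the exponents $(p,q)$ and for $(a,b)$, which are mutually compatible precisely because they share the scaling relation in hypothesis~(2) -- combine to bound the non‑retarded operator $f\mapsto\int_I\frac{\sin((t-t')\sqrt{-\Delta})}{\sqrt{-\Delta}}f(t')\,dt'$ from $L^{a'}(I;L^{b'})$ into $L^p(I;L^q)$, after which the Christ--Kiselev lemma reinstates the time‑ordering $t'<t$; this is legitimate since we are strictly off the double endpoint (the degenerate case $p=a'$ being itself excluded by wave‑admissibility). Together with the trivial zero‑mode contribution, this gives the proposition.

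The genuine content is the fixed‑frequency bound of the second paragraph. Because there is no global‑in‑time dispersion on $\bT^3$, one is forced to localize simultaneously in frequency and in physical space, and the delicate point is then to track the derivative loss in the dispersive estimate so that it matches the scaling index $s$ \emph{exactly}, for only then does the dyadic reassembly converge with the weight $N^{s}$ and no $\eps$‑loss. (Through the parametrix route the same point reappears as the need to control the order of the Fourier integral operator representing $\cos(t\sqrt{-\Delta})$ uniformly for $|t|$ bounded.) Everything else -- the Duhamel splitting, the $TT^*$ computation, Christ--Kiselev, and the Littlewood--Paley summation -- is routine.
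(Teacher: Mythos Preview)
The paper does not prove this proposition at all: it is quoted directly from the literature (Keel--Tao \cite{Keel1998} for the Euclidean estimates and Kapitanski \cite{kap90} for the extension to compact manifolds), with no argument supplied. Your proposal is therefore not to be compared against a proof in the paper, but against the standard argument behind those citations.

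That said, your sketch is correct and is essentially the standard route for transferring Strichartz estimates to the flat torus: localize in time to exploit finite speed of propagation, reduce to the Euclidean half-wave group, establish the frequency-localized dispersive bound, run Keel--Tao $TT^*$, reassemble via Littlewood--Paley, and handle the inhomogeneous term by duality plus Christ--Kiselev. The one point worth a second look is the check that Christ--Kiselev applies, i.e.\ that $a'<p$: since wave-admissibility forces $a\ge 2$ (so $a'\le 2$) and, combined with the exclusion of $(2,\infty)$, forces $p>2$, this is indeed automatic, as you implicitly claim. Your treatment of the zero mode and of the borderline $q=\infty$ case is also adequate for a sketch. In short: nothing is missing, and what you have written is a faithful outline of the argument the citations encode.
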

Indeed, in our case, the Strichartz type estimate we use is mainly for the pair $(\frac{2p}{p-3},2p)$ with regularity $s=1$ and the pair $(\infty,2)$ with $s=0$. Precisely, what we need is the following estimate
\begin{equation}
  \label{eq:strichartz:main}
     \|(u,\ptd_tu)\|_{L^{\infty}_t(I;\cH^1_x)}+\|u\|_{L^{\frac{2p}{p-3}}_t(I,L^{2p}_x)}\leq \|(u_0,u_1)\|_{\cH^1}+\|f\|_{L^1_t(I;L^2_x)}
   \end{equation}
   for any time interval $I$ containing $t_0$ with $|I|\leq 1$.In the following, we denote $\phi_0$ a radial smooth function on $\bR^3$ such that $\phi_0=1$ on the ball $B(0,1)$ and $\phi_0=0$ out side the ball $B(0,2)$. Then we recall the following projection operators for any integer $N\geq 1$
   \begin{equation*}
     P_{\leq N}u=a_0+\sum_{n\in\bZ^3\backslash\{0\}}\phi_0(\frac{n}{N})\Big(a_n\cos(n\cdot x)+b_n\sin(n\cdot x)\Big)
   \end{equation*}
   provided that $u$ is given by
   \begin{equation*}
     u=a_0 +\sum_{n\in\bZ^3\backslash{\{0\}}}a_n\cos(n\cdot x)+b_n\sin(n\cdot x).
   \end{equation*}
   When $N=2^{j}$ is a dyadic for some $j\geq 0$, we also define the projection operators
   \begin{equation*}
     P_{j}u:=P_{\leq 2^j}u-P_{\leq 2^{j-1}}u,
   \end{equation*}
   where we have used the convention that $P_{\leq 2^{-1}}u=0$. Then by the classical Littlewood-Paley theory, we have the following characterization of $H^s$-Sobolev spaces
   \begin{equation*}
     \|u\|^2_{H^s}\sim\sum_{j\geq0}2^{2js}\|P_ju\|^2_{L^2}.
   \end{equation*}
   We also have the Bernstein's inequality
   \begin{equation*}
     \|P_{\leq N}u\|_{L^q}\leq N^{\frac{3}{p}-\frac{3}{q}}\|P_{\leq N}u\|_{L^p},\ \ 1\leq p\leq q\leq\infty.
   \end{equation*}
   
\subsection{Probabilistic preliminaries}
Now let $(\alpha_j(\omega),\beta_{n,j}(\omega),\gamma_{n,j}(\omega))_{n\in \mathbb{Z}^3,\ j=0,1}$ be a series of independent identically distributed real random variables on the probability space $(\Omega,\cA,\bP)$ with the same distribution functions $\theta$. Assume that there exists $c>0$ such that
    \begin{equation}\label{main.assum}
        \forall \gamma\in \mathbb{R},\ \int_{-\infty}^{+\infty}e^{\gamma x}d\theta(x) \leq e^{c\gamma^2}.
    \end{equation}
Using such a series of random variables, we randomize the data $(u_0,u_1)\in \cH^s$, given by their Fourier series with all coefficients real
    \begin{equation}\label{fourier.series}
        u_j(x)=a_j+\sum_{n\in \mathbb{Z}_{\star}^3}\big(b_{n,j}\cos(n\cdot x)+ c_{n,j}\sin(n\cdot x) \big),\ \ \ j=0,1,\ \ \   \mathbb{Z}_{\star}^3 = \mathbb{Z}^3\backslash \{0\}
    \end{equation}
    by setting
    \begin{eqnarray}\label{data.rando}
        u_j^{\omega}(x)=\alpha_j(\omega)a_j+\sum_{n\in \mathbb{Z}_{\star}^3}\big(\beta_{n,j}(\omega)b_{n,j}\cos(n\cdot x)+ \gamma_{n,j}(\omega)c_{n,j}\sin(n\cdot x) \big).
    \end{eqnarray}
\begin{rem}
  This definition induces a Borel probability measure on $H^s$ equipped with its natural topology. Furthermore, this probability measure on $\cH^s$ has many nice properties such as "non-regularization of the data" and "non-vanishing on any open set", which exclude the possibility of "regularizing effect" originating from such procedure when applied to PDE. See \cite{Burq2011}\cite{Burq2007a} for more details.
\end{rem}
We first recall the following probabilistic estimates for any given $\ell^2$ sequence $(c_n)$, which is very important in obtaining probabilistic estimates for the random variables $(u^{\omega}_0,u^{\omega}_1)$.
\begin{prop}\cite{Burq2007a}
  \label{prop:proba:esti}
  Let $\{g_n\}$ be a sequence of mean-zero, real-valued random variables and $g_n$ satisfies the assumption \eqref{main.assum} for any integer $n$. Then for any $\ell^2$ sequence $(c_n)$ and any $q\geq2$, there exists $c>0$ such that
  \begin{equation*}
    \|\sum g_n(\omega)c_n\|_{L^q_{\omega}}\leq c\sqrt{q}\|(c_n)\|_{\ell^2}.
  \end{equation*}
\end{prop}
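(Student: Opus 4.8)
The plan is to run the classical sub-Gaussian large-deviation argument, whose three ingredients are a bound on the moment generating function of the sum, a Chernoff tail estimate, and a layer-cake moment computation. Write $S(\omega):=\sum_n c_n g_n(\omega)$ and $\sigma:=\|(c_n)\|_{\ell^2}$. (To make $S$ well defined I first treat finite sums: since the hypothesis \eqref{main.assum} forces $\mathbb{E}[g_n^2]\lesssim 1$ by differentiating the bound twice at $\gamma=0$, the partial sums are Cauchy in $L^2_\omega$ and converge; the uniform estimate below then passes to the limit by Fatou.) The first step is to bound the moment generating function. Since the $g_n$ are independent, for every $\gamma\in\bR$,
\[
  \mathbb{E}\big[e^{\gamma S}\big]=\prod_n \mathbb{E}\big[e^{\gamma c_n g_n}\big]\leq \prod_n e^{c\gamma^2 c_n^2}=e^{c\gamma^2\sigma^2},
\]
where the middle inequality applies \eqref{main.assum} with $\gamma c_n$ in place of $\gamma$; this is legitimate because that assumption holds for every real argument, regardless of the sign of $c_n$.

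The second step is a Chernoff bound. For $\lambda>0$ and $\gamma>0$, Markov's inequality applied to $e^{\gamma S}$ gives
\[
  \bP\big(S>\lambda\big)\leq e^{-\gamma\lambda}\,\mathbb{E}\big[e^{\gamma S}\big]\leq e^{-\gamma\lambda+c\gamma^2\sigma^2},
\]
and optimizing over $\gamma$ (taking $\gamma=\lambda/(2c\sigma^2)$) yields $\bP(S>\lambda)\leq e^{-\lambda^2/(4c\sigma^2)}$. Because the bound on the generating function is two-sided, the same reasoning applied to $-S$ gives the identical estimate for $\bP(-S>\lambda)$, so that
\[
  \bP\big(|S|>\lambda\big)\leq 2\,e^{-\lambda^2/(4c\sigma^2)}.
\]

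The third step converts this tail into the moment bound via the layer-cake representation. For $q\geq 2$,
\[
  \mathbb{E}\big[|S|^q\big]=q\int_0^\infty \lambda^{q-1}\,\bP\big(|S|>\lambda\big)\,d\lambda\leq 2q\int_0^\infty \lambda^{q-1}e^{-\lambda^2/(4c\sigma^2)}\,d\lambda,
\]
and the substitution $t=\lambda^2/(4c\sigma^2)$ collapses the right-hand side to $q\,(4c\sigma^2)^{q/2}\,\Gamma(q/2)$. Taking $q$-th roots,
\[
  \|S\|_{L^q_\omega}\leq (4c)^{1/2}\,\big(q\,\Gamma(q/2)\big)^{1/q}\,\sigma,
\]
so the claim reduces to $\big(q\,\Gamma(q/2)\big)^{1/q}\lesssim\sqrt{q}$.

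The one place that requires genuine care is this final estimate on the Gamma factor. By Stirling's formula $\Gamma(q/2)^{1/q}\sim (q/(2e))^{1/2}$ and $q^{1/q}\to 1$, so the prefactor is indeed $O(\sqrt{q})$; the subtlety is to track the constants so that the growth comes out exactly as $\sqrt{q}$ rather than $q$, and to verify the bound holds uniformly down to $q=2$ rather than merely asymptotically. Everything preceding it—the factorization of the generating function and the Chernoff optimization—is routine once independence and the two-sided form of \eqref{main.assum} are invoked. I also note in passing that the mean-zero hypothesis need not be used separately: differentiating the bound $\mathbb{E}[e^{\gamma g_n}]\leq e^{c\gamma^2}$ at $\gamma=0$ already forces $\mathbb{E}[g_n]=0$.
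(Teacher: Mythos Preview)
Your argument is correct and is precisely the standard sub-Gaussian route: independence gives the product bound on the moment generating function, Chernoff optimization yields the Gaussian tail, and the layer-cake identity together with Stirling converts that tail into the $\sqrt{q}$ moment growth. The paper itself does not supply a proof of this proposition; it merely records the statement and cites \cite{Burq2007a}, where exactly this argument is carried out. So there is nothing to compare beyond noting that you have reproduced the classical proof the paper is invoking.
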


By using this estimates, we can prove the following local-in-time probabilistic Strichartz estimates by using the ideas used in \cite{Burq2007a}\cite{Burq2007}\cite{Poc}.
\begin{prop}\cite{Burq2007a}\cite{Burq2007}\cite{Poc}
  \label{prop:local-in-time:strich}
  Let $(u_0,u_1)\in \cH^s(\bT^3)$ be given by the series (\ref{fourier.series}) with all coefficients real and $(u^{\omega}_0,u^{\omega}_1)$ be randomized as in (\ref{data.rando}). Assume $I=[a,b]\subset \bR$ is a compact time interval.
  \begin{enumerate}[(i)]
  \item If $s=0$, then for any given $1\leq q<\infty$ and $2\leq r<\infty$, there exists $C,c>0$ such that
    \begin{equation*}
      \bP\Big(\|S(t)(u^{\omega}_0,u^{\omega}_1)\|_{L^{q}_tL^{r}_x(I\times\bT^3)}>\lambda\Big)\leq C\exp\Big(-c\frac{\lambda^2}{|I|^{\frac{2}{q}}\|(u_0,u_1)\|_{\cH^{0}}^2}\Big).
    \end{equation*}
  \item For any given $1\leq q<\infty, 2\leq r\leq\infty$, there exist $C,c>0$ such that
    \begin{equation*}
      \bP\Big(\|S(t)(u^{\omega}_0,u^{\omega}_1)\|_{L^q_tL^r_x(I\times\bT^3)}>\lambda\Big)\leq C\exp\Big(-c\frac{\lambda^2}{|I|^{\frac{2}{q}}\|(u_0,u_1)\|^2_{\cH^s}}\Big)
    \end{equation*}
    for $(ii.a)$ $s=0$ if $r<\infty$ and $(ii.b)$ $s>0$ if $r=\infty$.
  \end{enumerate}
\end{prop}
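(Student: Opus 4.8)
The plan is to derive the exponential tail bounds from uniform‑in‑$p$ bounds on the moments $\|\cdot\|_{L^p_\omega}$ of the relevant space–time norm, via the usual large‑deviation lemma: if a nonnegative random variable $F$ satisfies $\|F\|_{L^p_\omega}\le C\sqrt p\,A$ for every $p\ge p_0$, then $\bP(F>\lambda)\le C'\exp(-c\lambda^2/A^2)$. Indeed Chebyshev's inequality gives $\bP(F>\lambda)\le\lambda^{-p}\|F\|_{L^p_\omega}^p\le(CA\sqrt p/\lambda)^p$, and optimizing in $p\sim\lambda^2/A^2$ yields the stated bound once $\lambda\gtrsim\sqrt{p_0}\,A$, the remaining (bounded) range of $\lambda$ being absorbed into $C'$. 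Hence in every case it suffices to prove, for all $p$ large enough depending on $q,r$,
\begin{equation*}
  \big\|\,\|S(t)(u^{\omega}_0,u^{\omega}_1)\|_{L^q_tL^r_x(I\times\bT^3)}\,\big\|_{L^p_\omega}\lesssim|I|^{\frac{1}{q}}\sqrt p\,\|(u_0,u_1)\|_{\cH^s}.
\end{equation*}

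The input from randomness is pointwise in $(t,x)$: for fixed $(t,x)$, the random variable $S(t)(u^{\omega}_0,u^{\omega}_1)(x)$ is a series $\sum_n g_n(\omega)\kappa_n(t,x)$ in the i.i.d.\ variables $g_n\in\{\alpha_j,\beta_{n,j},\gamma_{n,j}\}$, where each $\kappa_n(t,x)$ is a Fourier coefficient of $u_0$ or $u_1$ times bounded trigonometric factors — the contribution of $\tfrac{\sin(t\sqrt{-\Delta})}{\sqrt{-\Delta}}$ carrying the extra decay $|n|^{-1}$ away from the zero mode, and the zero mode being a single term bounded on the compact interval $I$. Therefore $\sum_n|\kappa_n(t,x)|^2\lesssim\|u_0\|_{L^2}^2+\|u_1\|_{H^{-1}}^2=\|(u_0,u_1)\|_{\cH^0}^2$ uniformly in $(t,x)\in I\times\bT^3$, and Proposition~\ref{prop:proba:esti} yields $\|S(t)(u^{\omega}_0,u^{\omega}_1)(x)\|_{L^p_\omega}\le c\sqrt p\,\|(u_0,u_1)\|_{\cH^0}$ uniformly in $(t,x)$.

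For parts (i) and (ii.a), i.e.\ $s=0$ and $2\le r<\infty$, choose $p\ge\max(q,r)$ and apply Minkowski's inequality twice to bring $\|\cdot\|_{L^p_\omega}$ inside the $L^q_tL^r_x$ norm:
\begin{equation*}
  \big\|\,\|S(t)(u^{\omega}_0,u^{\omega}_1)\|_{L^q_tL^r_x}\,\big\|_{L^p_\omega}\le\big\|\,\|S(t)(u^{\omega}_0,u^{\omega}_1)(x)\|_{L^p_\omega}\,\big\|_{L^q_tL^r_x}\le c\sqrt p\,|I|^{\frac{1}{q}}|\bT^3|^{\frac{1}{r}}\|(u_0,u_1)\|_{\cH^0},
\end{equation*}
and the large‑deviation lemma finishes the proof. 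For part (ii.b), i.e.\ $r=\infty$ and $s>0$, Minkowski is not available for $L^\infty_x$, so we route through a Sobolev embedding: fix $\rho<\infty$ with $s\rho>3$, so that $W^{s,\rho}(\bT^3)\hookrightarrow L^\infty(\bT^3)$. Since the Fourier multiplier $(1-\Delta)^{s/2}$ commutes both with $S(t)$ and with the randomization (it only rescales Fourier coefficients by $\langle n\rangle^s$),
\begin{equation*}
  \|S(t)(u^{\omega}_0,u^{\omega}_1)\|_{L^\infty_x}\lesssim\|(1-\Delta)^{s/2}S(t)(u^{\omega}_0,u^{\omega}_1)\|_{L^\rho_x}=\big\|S(t)\big((1-\Delta)^{s/2}u_0^{\omega},(1-\Delta)^{s/2}u_1^{\omega}\big)\big\|_{L^\rho_x},
\end{equation*}
and because $\|((1-\Delta)^{s/2}u_0,(1-\Delta)^{s/2}u_1)\|_{\cH^0}=\|(u_0,u_1)\|_{\cH^s}$, this reduces (ii.b) to the already‑established case $s=0$, $r=\rho<\infty$.

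The only genuinely delicate point is this interchange of the probabilistic norm with the spatial norm: Minkowski's inequality is precisely what forces $r<\infty$, and the Sobolev detour that handles $r=\infty$ costs exactly $s$ derivatives, which is affordable precisely because $s>0$ — this is the source of the case split (ii.a)/(ii.b). Everything else is routine: verifying the uniform $\ell^2$‑bound on the coefficients $\kappa_n(t,x)$, choosing $p$ large relative to $q,r,\rho$, tracking the $|I|^{1/q}$ factor through the time integration, and dealing with the zero Fourier mode separately.
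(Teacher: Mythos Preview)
Your proof is correct and follows exactly the standard argument from the cited references \cite{Burq2007a}, \cite{Burq2007}, \cite{Poc}: reduce the tail bound to a moment bound $\|F\|_{L^p_\omega}\lesssim\sqrt p\,|I|^{1/q}\|(u_0,u_1)\|_{\cH^s}$ via Chebyshev plus optimisation in $p$, obtain that moment bound by Minkowski (pulling $L^p_\omega$ inside $L^q_tL^r_x$ for $p\ge\max(q,r)$) combined with the pointwise estimate from Proposition~\ref{prop:proba:esti}, and handle $r=\infty$ by Sobolev embedding $W^{s,\rho}\hookrightarrow L^\infty$ at the cost of $s>0$ derivatives. The paper itself does not spell out a proof of this proposition but simply points to these references, so there is nothing to compare beyond noting that your write-up is the intended one.
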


By denoting $\tilde{S}(t)$ by
\begin{equation}
  \label{eq:defn:tilde:S}
  \tilde{S}(t)(u_0,u_1):=-\frac{|\nabla|}{\lan\nabla\ran}\sin(t|\nabla|)u_0+\frac{\cos(t|\nabla|)}{\lan\nabla\ran}u_1,
\end{equation}
we state the following proposition, which plays an important role in obtaining the probabilistic \textit{a priori} bound on the the solution to {{Equation}} (\ref{eq:truncated:v}). 
\begin{prop}
  \label{prop:long-time:strich}
  Let $(u_0,u_1)\in \cH^s(\bT^3)$ be given by the series (\ref{fourier.series}) with all coefficients real and $(u^{\omega}_0,u^{\omega}_1)$ be randomized as in (\ref{data.rando}). And let $T>0$ and $S^{\ast}(t)=S(t)$ or $\tilde{S}(t)$. Then for $2\leq r\leq \infty$, we have
  \begin{equation}
    \label{eq:long-time:strich}
    \bP\Big(\|S^{\ast}(t)(u^{\omega}_0,u^{\omega}_1)\|_{L^{\infty}_tL^{r}_x([0,T]\times\bT^3)}>\lambda\Big)\leq C \exp\Big(-c\frac{\lambda^2}{\max(1,T^2)\|(u_0,u_1)\|_{\cH^{\eps}}^2}\Big)
  \end{equation}
  for any $\eps>0$, where the constants $C$ and $c$ depend only on $r$ and $\eps$.
\end{prop}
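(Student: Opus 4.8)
The plan is to trade the $L^\infty$-norm in time on $[0,T]$ for a \emph{fractional} Sobolev norm in time, which can then be estimated through the local-in-time bound of Proposition~\ref{prop:local-in-time:strich} applied to a slightly modified free wave flow. First I would fix, once and for all, an exponent $q<\infty$ and a number $\sigma\in(0,\eps)$ with $\sigma q>1$; this is possible precisely because $\eps>0$ (and we may assume $\eps\le 1$). Let $\chi\in C^\infty_c(\bR)$ be a fixed profile with $\chi\equiv1$ on $[0,T]$, supported in $[-C_0,T+C_0]$, with transition regions of width $O(1)$ and all derivatives bounded uniformly in $T$; then the support of $\chi$ has length $\sim\max(1,T)$. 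By the one-dimensional Sobolev embedding $W^{\sigma,q}_t(\bR)\hookrightarrow L^\infty_t(\bR)$,
\[
\big\|S^{\ast}(t)(u_0^{\omega},u_1^{\omega})\big\|_{L^\infty_tL^r_x([0,T]\times\bT^3)}\lesssim\big\|\chi\,S^{\ast}(t)(u_0^{\omega},u_1^{\omega})\big\|_{L^q_tL^r_x}+\big\||\ptd_t|^{\sigma}\big(\chi\,S^{\ast}(t)(u_0^{\omega},u_1^{\omega})\big)\big\|_{L^q_tL^r_x},
\]
with the norms on the right taken over $\bR\times\bT^3$, so it suffices to bound the probability that either term exceeds $\lambda/2$.

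For the first term one discards $\chi$ and applies Proposition~\ref{prop:local-in-time:strich} on the interval $[-C_0,T+C_0]$, whose length is $\sim\max(1,T)$ (using part~(i)/(ii.a) if $r<\infty$ and part~(ii.b) if $r=\infty$; this is the only point where the positive regularity $\eps$, rather than $\eps=0$, is needed, and it reflects the fact that the $r=\infty$ endpoint forces a positive power of $\lan\nabla\ran$ on the data). This produces a bound of the shape $C\exp\big(-c\lambda^2/(\max(1,T)^{2/q}\|(u_0,u_1)\|_{\cH^{\eps}}^2)\big)$. For the second term the key algebraic fact is that $|\ptd_t|^{\sigma}$ commutes through the free wave flow and there equals $|\nabla|^{\sigma}$ acting on the data: since $e^{\pm it|\nabla|}$ simultaneously diagonalizes $\ptd_t$ and $|\nabla|$, one has $|\ptd_t|^{\sigma}S^{\ast}(t)(u_0,u_1)=S^{\ast}(t)(|\nabla|^{\sigma}u_0,|\nabla|^{\sigma}u_1)$ for $S^{\ast}=S$ or $\tilde S$. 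Expanding $|\ptd_t|^{\sigma}(\chi\,S^{\ast}(t)(\cdot))$ by a fractional Leibniz rule, the main contribution is $\chi\,S^{\ast}(t)\big(|\nabla|^{\sigma}u_0^{\omega},|\nabla|^{\sigma}u_1^{\omega}\big)$, to which Proposition~\ref{prop:local-in-time:strich} applies once more, now with data $\big(|\nabla|^{\sigma}u_0,|\nabla|^{\sigma}u_1\big)$, which lies in $\cH^{s}$ for every $s\le\eps-\sigma$ (leaving the room needed when $r=\infty$) and satisfies $\big\|\big(|\nabla|^{\sigma}u_0,|\nabla|^{\sigma}u_1\big)\big\|_{\cH^{0}}\lesssim\|(u_0,u_1)\|_{\cH^{\eps}}$ because $\sigma\le\eps$. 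The remaining terms in the Leibniz expansion carry at least one derivative of $\chi$, are therefore supported in the $O(1)$-wide transition zones, and are controlled in the same manner with a $T$-dependence no worse than the main term. Hence the second term obeys the same bound $C\exp\big(-c\lambda^2/(\max(1,T)^{2/q}\|(u_0,u_1)\|_{\cH^{\eps}}^2)\big)$.

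Summing the two contributions and using $\max(1,T)^{2/q}\le\max(1,T^2)$ (valid since $q\ge 1$) yields \eqref{eq:long-time:strich}; in the range of $\lambda$ where the exponential is $\gtrsim 1$ the estimate is trivial after enlarging $C$, and all constants depend only on $r$ (through $q$) and on $\eps$ (through $\sigma$), as asserted. The part I expect to require genuine care is the analysis of the second term: one has to make rigorous the identity $|\ptd_t|^{\sigma}S^{\ast}=S^{\ast}|\nabla|^{\sigma}$ on a \emph{bounded} time interval — which is exactly the reason for introducing the cutoff $\chi$ and the fractional Leibniz rule — and, most importantly, one must verify that removing a time derivative of order $\sigma$ costs only $\sigma\le\eps$ spatial derivatives rather than a full one. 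It is this gain that allows the scheme to close at the supercritical regularity and that turns what could have been a prefactor growing with the number of unit time-subintervals of $[0,T]$ into the harmless factor $\max(1,T^2)$.
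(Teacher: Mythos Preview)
Your proposal is correct and follows essentially the same approach that the paper indicates: the paper does not give a detailed proof but refers to Oh--Pocovnicu and explicitly notes the key trick of viewing $\lan\ptd_t\ran^{\eps}=\lan\nabla\ran^{\eps}$ on free waves $e^{\pm it\sqrt{-\Delta}}u_0$, which is precisely the mechanism you exploit through the time-Sobolev embedding $W^{\sigma,q}_t\hookrightarrow L^\infty_t$ and the identity $|\ptd_t|^{\sigma}S^{\ast}=S^{\ast}|\nabla|^{\sigma}$. Your treatment of the cutoff $\chi$ and the fractional Leibniz rule makes explicit what the paper leaves to the references; the only minor point you might add is that the zero Fourier mode of $S(t)(u_0^\omega,u_1^\omega)$ is an affine function of $t$ rather than a superposition of $e^{\pm it|\nabla|}$, but this one-dimensional piece is trivially controlled on $[0,T]$ with the same $\max(1,T^2)$ dependence.
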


The proof of Proposition \ref{prop:long-time:strich} runs the same as what T.Oh and O. Pocovnicu did in \cite{OhPo}. However, by viewing $\lan\ptd_t\ran^{\eps}=\lan\nabla\ran^{\eps}$ when acting on $e^{\pm it\sqrt{-\Delta}}u_0$, we can prove Proposition \ref{prop:long-time:strich} by the trick of loss of derivatives in space-time. See \cite{BuLe} for more details.


\section{Probabilistic Analysis of NLW}
We first look at the truncated equation
\begin{equation}
  \label{eq:truncated}
  \left\{
    \begin{split}
    &(\ptd^2_t-\Delta)u_N+|u_N|^{p-1}u_N=0\\
    &\big(u_N,\ptd_tu_N\big)=(P_{\leq N}u_0,P_{\leq N}u_1).
  \end{split}
  \right.
\end{equation}
As the initial data $(P_{\leq N}u_0,P_{\leq N}u_1)$ is smooth for any data $(u_0,u_1)\in\cH^{s}$, Equation \ref{eq:truncated} has global smooth solution. In order to study the contributions of the high-frequency portion of the initial data, we rewrite Equation \ref{eq:truncated} as
\begin{equation}
  \label{eq:truncated:v}
  \left\{\begin{split}
    &(\ptd^2_t-\Delta)v_N+|v_N+z_N|^{p-1}(v_N+z_N)=0\\
    &\big(v_N,\ptd_tv_N\big)=(0,0),
  \end{split}
  \right.
\end{equation}
where $z_N=S(t)(P_{\leq N}u_0,P_{\leq N}u_1)$ is the free wave propagation of $(P_{\leq N}u_0,P_{\leq N}u_1)$. Then we have
\begin{prop}
  \label{lemma:key:energy}
  Let $s\in(\frac{p-3}{p-1},1)$ and $N\geq1$ dyadic. Given $T,\eps>0$, there exists $\tilde{\Omega}_{N,T,\eps}\subset\Omega$ such that
  \begin{enumerate}[(i)]
  \item $\bP(\tilde{\Omega}^c_{N,T,\eps})<\eps$,
  \item There exists a finite constant $C(T,\eps,\|(u_0,u_1)\|_{\cH^s})$, independent of $N$, such that the following energy bound holds
    \begin{equation}
      \label{eq:energy:bound}
      \sup_{t\in[0,T]}\|(v^{\omega}_N(t),\ptd_tv^{\omega}_N(t)\|_{\cH^1}\leq C(T,\eps,\|(u_0,u_1)\|_{\cH^s})
    \end{equation}
    for any solutions $v^{\omega}_N$ to \eqref{eq:truncated:v} with $\omega\in \tilde{\Omega}_{N,T,\eps}$.
  \end{enumerate}
\end{prop}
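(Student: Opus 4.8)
The plan is to build a modified energy for $v_N$ that dominates the $\cH^1$-norm, to show its growth is driven entirely by space--time norms of $z_N$ together with the Strichartz norm $\|v_N\|_{L^{2p/(p-3)}_tL^{2p}_x}$, and then to close the resulting coupled inequality by a continuity argument on $[0,T]$ cut into finitely many short intervals, the number of which is fixed in advance by the probabilistic bounds; subcriticality $3<p<5$ is used decisively in the closing step. First set $z:=S(t)(u_0^\omega,u_1^\omega)$, so $z_N=P_{\leq N}z$. Since the smooth cut-offs $P_{\leq N}$ are bounded on every $L^r(\bT^3)$ uniformly in $N$ and $r$, and $\cH^s\subset\cH^0$ for $s>0$, Proposition \ref{prop:local-in-time:strich}(i) applied on $[0,T]$ with the finite exponents $q\in\{\tfrac{2p}{p-3},\,a'\}$ and $r=2p$ (and, if one instead keeps $z_N$ inside the energy, Proposition \ref{prop:long-time:strich}) produces a set $\tilde{\Omega}_{N,T,\eps}$ with $\bP(\tilde{\Omega}^c_{N,T,\eps})<\eps$ on which
\[
  \|z_N\|_{L^{\frac{2p}{p-3}}_tL^{2p}_x([0,T]\times\bT^3)}+\|z_N\|_{L^{a'}_tL^{2p}_x([0,T]\times\bT^3)}\leq R,
\]
with $R=R(T,\eps,\|(u_0,u_1)\|_{\cH^s})$ \emph{independent of $N$}; here $a'$ is the exponent with $\tfrac1{a'}=\tfrac{-p^2+6p-3}{2p}$, which lies in $[1,\infty)$ precisely because $p\in(3,5)\subset(3-\sqrt6,\,3+\sqrt6)$ and $(p-1)(p-3)\geq0$.

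Next I would put
\[
  \cE(t):=\tfrac12\|\ptd_tv_N(t)\|_{L^2}^2+\tfrac12\|\nabla v_N(t)\|_{L^2}^2+\tfrac1{p+1}\|v_N(t)\|_{L^{p+1}}^{p+1}.
\]
Since $\|v_N\|_{L^2}\lesssim\|v_N\|_{L^{p+1}}$ on $\bT^3$, one has $\|(v_N,\ptd_tv_N)(t)\|_{\cH^1}^2\lesssim1+\cE(t)$, and $\cE(0)=0$ because the data of \eqref{eq:truncated:v} vanish. Differentiating $\cE$, substituting $\ptd_t^2v_N=\Delta v_N-|v_N+z_N|^{p-1}(v_N+z_N)$, integrating $\int\Delta v_N\,\ptd_tv_N$ by parts, and cancelling the two $\ptd_tv_N$--terms of the nonlinearities leaves
\[
  \frac{d}{dt}\cE(t)=\int_{\bT^3}\Big(|v_N|^{p-1}v_N-|v_N+z_N|^{p-1}(v_N+z_N)\Big)\ptd_tv_N\,dx,
\]
with, crucially, \emph{no time derivative of $z_N$}. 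Using $\big||a|^{p-1}a-|b|^{p-1}b\big|\lesssim(|a|+|b|)^{p-1}|a-b|$ and Hölder in $x$ with exponents $(2p,2p,2)$, this is $\lesssim(\|v_N\|_{L^{2p}}^{p-1}\|z_N\|_{L^{2p}}+\|z_N\|_{L^{2p}}^p)\sqrt{\cE}$; integrating (via the elementary fact that $f^2\leq\int_0^t gf$ forces $f\leq\tfrac12\int_0^t g$) and applying Hölder in time over $[0,T]$ — legitimate because $p<\tfrac{2p}{p-3}$ gives $L^{\frac{2p}{p-3}}_t([0,T])\hookrightarrow L^p_t([0,T])$, and because pairing $\|v_N\|_{L^{2p}_x}^{p-1}$ in $L^{\frac{2p}{(p-1)(p-3)}}_t$ with $\|z_N\|_{L^{2p}_x}$ in $L^{a'}_t$ gives $\lesssim_T R\,Y^{p-1}$ — one obtains, on $\tilde{\Omega}_{N,T,\eps}$,
\[
  \sup_{[0,T]}\cE\ \lesssim_T\ R^{2p}+R^2\,Y^{2(p-1)},\qquad Y:=\|v_N\|_{L^{\frac{2p}{p-3}}_tL^{2p}_x([0,T]\times\bT^3)};
\]
call this inequality $(\star)$.

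It remains to bound $Y$, and this is where $p<5$ enters. On any $J$ with $|J|\leq1$, the Strichartz estimate \eqref{eq:strichartz:main} for $v_N$ gives
\[
  \|v_N\|_{L^{\frac{2p}{p-3}}_tL^{2p}_x(J)}\lesssim\|(v_N,\ptd_tv_N)(\inf J)\|_{\cH^1}+|J|^{\frac{5-p}{2}}\Big(\|v_N\|_{L^{\frac{2p}{p-3}}_tL^{2p}_x(J)}+\|z_N\|_{L^{\frac{2p}{p-3}}_tL^{2p}_x(J)}\Big)^p,
\]
because $\big\||v_N+z_N|^{p-1}(v_N+z_N)\big\|_{L^1_tL^2_x(J)}=\|v_N+z_N\|_{L^p_tL^{2p}_x(J)}^p$ and $L^{\frac{2p}{p-3}}_t(J)\hookrightarrow L^p_t(J)$ with norm $|J|^{\frac{5-p}{2p}}$. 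I would cut $[0,T]$ into $M$ intervals, each of length $\leq1$, on each of which $\|z_N\|_{L^{\frac{2p}{p-3}}_tL^{2p}_x}\leq\delta_0$ for a small universal $\delta_0$; since $\sum_k\|z_N\|_{L^{\frac{2p}{p-3}}_tL^{2p}_x(J_k)}^{\frac{2p}{p-3}}=\|z_N\|_{L^{\frac{2p}{p-3}}_tL^{2p}_x([0,T])}^{\frac{2p}{p-3}}\leq R^{\frac{2p}{p-3}}$, one may take $M\lesssim(R/\delta_0)^{\frac{2p}{p-3}}+T$, \emph{a deterministic number}. Then a continuity argument in the right endpoint of the current interval, for the pair $\big(\sup_{[0,t]}\cE,\ \|v_N\|_{L^{\frac{2p}{p-3}}_tL^{2p}_x([0,t])}\big)$, closes the estimate: on each $J_k$, after a further subdivision into pieces short enough — with lengths dictated by the already-controlled energy — that the positive power $|J|^{\frac{5-p}{2}}$ lets the displayed quadratic-type inequality bootstrap, one gets $\|v_N\|_{L^{\frac{2p}{p-3}}_tL^{2p}_x(J_k)}\lesssim\|(v_N,\ptd_tv_N)(\inf J_k)\|_{\cH^1}+\delta_0$, whereupon $(\star)$ bounds the increment of $\cE$ across $J_k$ by a bounded, $T$--dependent factor. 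After the $M$ steps this yields $\sup_{[0,T]}\|(v_N,\ptd_tv_N)(t)\|_{\cH^1}\leq C(T,\eps,\|(u_0,u_1)\|_{\cH^s})$ uniformly in $N$, i.e.\ \eqref{eq:energy:bound}.

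The only genuine difficulty is this last step. Because $p>3$, the feedback between $\cE$ and $Y$ in $(\star)$ is superlinear, so a bare Grönwall argument fails; one must interleave the energy increment with the Strichartz bootstrap on short intervals and verify that the total number of elementary intervals — which a priori would depend on the unknown final energy — can in fact be fixed from the outset once the bootstrap thresholds are chosen, using only the deterministic quantities $R$ and $T$. The subcriticality $p<5$, equivalently the strictly positive exponent $\tfrac{5-p}{2}$ of the interval length, is precisely what makes this maneuver work; it is also the mechanism behind the threshold $\tfrac{p-3}{p-1}$ degenerating to the known endpoints as $p\to3$ or $p\to5$.
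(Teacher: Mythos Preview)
Your argument has a genuine gap in precisely the place you flag as ``the only genuine difficulty,'' and it cannot be repaired within your framework. The feedback between $\cE$ and $Y$ in $(\star)$ is too superlinear to be closed by subdivision. Concretely: suppose for a continuity argument that $\cE\leq B$ on $[0,T]$. The Strichartz bootstrap on each elementary piece $J$ requires $|J|^{\frac{5-p}{2}}B^{\frac{p-1}{2}}\ll1$, i.e.\ $|J|\lesssim B^{-(p-1)/(5-p)}$, so the number of pieces is $\sim TB^{(p-1)/(5-p)}$, each contributing $Y(J)\lesssim\sqrt{B}$; summing in $\ell^{2p/(p-3)}$ and inserting into $(\star)$ yields
\[
  B\ \lesssim\ C_{T,R}+R^2\,T^{\frac{(p-1)(p-3)}{p}}\,B^{\frac{(p+3)(p-1)}{p(5-p)}}.
\]
For every $p\in(3,5)$ the exponent $\tfrac{(p+3)(p-1)}{p(5-p)}$ exceeds $1$ (it is $21/4$ at $p=4$), so no finite $B$ closes the inequality once $R$ is large. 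Your ``further subdivision into pieces short enough --- with lengths dictated by the already-controlled energy'' presupposes that the energy is already controlled on $J_k$, which is exactly what you are trying to prove; the number of elementary pieces genuinely depends on the unknown final energy and cannot be fixed from $R$ and $T$ alone. A second, independent red flag: your argument uses only $s>0$ (through $\cH^s\subset\cH^0$ and Proposition~\ref{prop:local-in-time:strich}), never the hypothesis $s>\tfrac{p-3}{p-1}$; if it worked it would prove the proposition for every $s>0$.

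The paper's proof avoids the superlinearity by a different mechanism. In the dangerous term $-p\!\int_0^t\!\!\int\partial_tv\,|v|^{p-1}z$ one integrates by parts \emph{in time}, moving $\partial_t$ onto $z$; since $\partial_tz=\langle\nabla\rangle\tilde z$ this costs one derivative on the linear evolution but leaves an integrand that can be bounded \emph{linearly} in $E(v)$. This is the content of the paper's key Lemma~\ref{lem:bound:I2}, a Littlewood--Paley estimate giving
\[
  \Big|\int_{\bT^3}|v|^{p-1}v\,\langle\nabla\rangle\tilde z\,dx\Big|\ \lesssim\ \big(1+\|\langle\nabla\rangle^{s-}\tilde z\|_{L^\infty_x}\big)E(v)+\|\langle\nabla\rangle^{s-}\tilde z\|_{L^\infty_x}^{p+1},
\]
whose dyadic sums converge exactly when $s>\tfrac{p-3}{p-1}$; this is where the threshold originates. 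The remaining second-order term $II$ is sublinear in $E$ (power $\tfrac{3(p-1)}{2(p+1)}<1$ since $p<5$), so altogether $\tfrac{d}{dt}E\leq C_z(t)E+C_z'(t)$ and Gr\"onwall finishes --- no Strichartz norm of $v$ enters the energy argument at all. The probabilistic set $\tilde\Omega_{N,T,\eps}$ must accordingly control $\|\langle\nabla\rangle^{s-}\tilde z_N\|_{L^\infty_{T,x}}$ (via Proposition~\ref{prop:long-time:strich}), not merely Lebesgue norms of $z_N$.
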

\begin{rem}
  \label{rem:vN:nonN}
  Indeed, we can even choose the set $\tilde{\Omega}_{N,T,\eps}$ independent of $N$, which is just a careful application of propositions \ref{prop:local-in-time:strich} and \ref{prop:long-time:strich}.
\end{rem}
\begin{proof} We argue in the same way as Oh-Pocovnicu did in \cite{OhPo}. First observe that
  \begin{equation*}
    \|v_N^{\omega}\|_{L^2}\leq c\|v^{\omega}_N\|_{L^{p+1}}\leq c E(v^{\omega}_N)^{\frac{1}{p+1}}.
  \end{equation*}
  Now if we have
  \begin{equation}
    \label{eq:assup:energy}
    \sup_{t\in[0,T]}E(v^{\omega}_N)\leq C
  \end{equation}
  then we would have
  \begin{equation*}
    \sup_{t\in[0,T]}\|(v^{\omega}_N(t),\ptd_tv^{\omega}_N(t)\|_{\cH^1}^2\leq (C+C^{\frac{2}{p+1}}).
  \end{equation*}
  Consequently, we only need to prove \eqref{eq:assup:energy}.

  As above $z_N(t)=S(t)(P_{\leq N}u_0,P_{\leq N}u_1)$ and $\lan \nabla \ran \tilde{z}_N=\ptd_tz_N$. Let $\delta >0$ sufficiently small such that $\frac{p-3}{p-1}+\delta<s$. For fixed $T,\eps>0$ we define $\tilde{\Omega}_{N,T,\eps}$ by
  \begin{equation*}
    \tilde{\Omega}_{N,T,\eps}:=\{\omega:\|z_N^{\omega}\|_{L^{2p}_{T,x}}+\|z^{\omega}_N\|_{L^{\infty}_TL^{p+1}_x}+\|z^{\omega}_N\|_{L^{\infty}_tL^{\frac{4(p+1)}{5-p}}}^2+\|\lan\nabla\ran^{s-}\tilde{z}^{\omega}_N\|_{L^{\infty}_{T,x}}\leq \lambda\}
  \end{equation*}
  where $\lambda=\lambda(T,\eps,\|(u_0,u_1)\|_{\cH^s})>0$ is chosen such that $\bP(\tilde{\Omega}^c_{N,T,\eps})<\eps$. The existence of $\tilde{\Omega}_{N,T,\eps}$ is guaranteed by Lemma \ref{prop:local-in-time:strich} and Lemma (\ref{eq:long-time:strich}).

  In the following, we are going to prove
  \begin{equation}
    \label{eq:energy:bound:E}
    \sup_{t\in[0,T]}E(v^{\omega}_N(t))\leq C(T,\eps,\|(u_0,u_1)\|_{\cH^s})
  \end{equation}
  for $\omega\in\tilde{\Omega}_{N,T,\eps}$.
  In the following of this section, we suppress the index $N$ for the solution $v_N$ to Equation \eqref{eq:truncated:v}. Thus to achieve the energy bound \eqref{eq:energy:bound:E}, we differentiate the expression of the energy and obtain
  \begin{eqnarray*}
    \frac{d}{dt}E(v)(t)&=&\int_{\bT^3}\ptd_tv(\ptd^2_tv-\Delta v+|v|^{p-1}v)dx\\
                       &=&-\int_{\bT^3}\ptd_tv(|v+z|^{p-1}(v+z)-|v|^{p-1}v)dx\\
                       &=&-\int_{\bT^3} \ptd_tv(p|v|^{p-1}z+p(p-1)|v+\theta z|^{p-2}z^2)dx
  \end{eqnarray*}
  where in the last equality we have used differential mean value equality with $\theta\in[0,1]$.
  By integrating in time, we have
  \begin{eqnarray*}
    E(v)(t) &=& E(v)(0)-\int_0^t\int_{\bT^3}\ptd_tv(t')[pz(t')|v(t')|^{p-1}+p(p-1)|v(t')+\theta z(t')|^{p-2} z(t')^2]dt'dx\\
            &=&-\int_{\bT^3}\int^t_0z(t')\ptd_t[|v|^{p-1}v(t')]dt'dx-\int^t_0\int_{\bT^3}\ptd_tv(t')p(p-1)|v(t')+\theta z(t')|^{p-2} z(t')^2dt'dx\\
            &=:& I(t)+II(t).
  \end{eqnarray*}

  Noticing that
  \begin{eqnarray*}
    ||v+\theta z|^{p-2} z^2|\leq c(|v|^{p-2} z^2+|z|^p), 
  \end{eqnarray*}
  where $c$ is a constant depending only on $p$,  we have
  \begin{eqnarray*}
    |II(t)|&\leq& \int_0^t\|\ptd_tv(t')\|_{L^2}\|v(t')\|^{p-2}_{L^{p+1}}\|z\|^2_{L^{\frac{4(p+1)}{5-p}}}(t')+\int_0^t\|\ptd_tv(t')\|_{L^2}\|z(t')\|^{2p}_{L^{2p}}dt'\\
      &\leq& (1+\|z\|^2_{L^{\infty}_tL^{\frac{4(p+1)}{5-p}}})\int^t_0\max\big(E(v)(t'),E(v)^{\frac{3(p-1)}{2(p+1)}}\big)dt'+\|z\|^{2p}_{L^{4p}_TL^{2p}_x}.
  \end{eqnarray*}
  Thus thanks to $p<5$, we have that $\frac{3p-3}{2p+2}\leq 1$. And hence we only need to consider
  \begin{equation}
    \label{eq:bound:II}
    |II(t)|\leq (1+\|z\|^2_{L^{\infty}_tL^{\frac{4(p+1)}{5-p}}})\int^t_0E(v)(t')dt'+\|z\|^{2p}_{L^{4p}_TL^{2p}_x}.
  \end{equation}

  Now, we are going to deal with the term $I(t)$. As $v(0)=0$ and $v=v_N^{\omega}$ is smooth, both in $t$ and $x$, integrating by parts, we have
  \begin{equation}
    \label{eq:bound:I}
    I(t)=-\int_{\bT^3}z(t)|v|^{p}+\int_{\bT^3}\int^t_0\ptd_tz(t')|v(t')|^pdt'dx=:I_1(t)+I_2(t).
  \end{equation}
  As for the first term $I_1(t)$, we have
  \begin{equation}
    \label{eq:bound:I1}
    |I_1(t)|\leq a\|v(t)\|^{p+1}_{L^{p+1}}+a^{-p}\|z(t)\|^{p+1}_{L^{p+1}}\leq aE(v)(t)+a^{-p}\|z\|_{L^{\infty}_TL_x^{p+1}}^{p+1},
  \end{equation}
  where $a$ is a small constant, to be chosen later.

  To bound the term $I_2(t)$, we need the following lemma:
\begin{lem}  
  \label{lem:bound:I2}
  Let $v,\tilde{z}$ as above, we have
  \begin{equation*}
    |\int_{\bT^3}|v|^{p-1}v\lan \nabla\ran \tilde{z}dx|\leq (\|\lan \nabla\ran ^{s-}z\|_{L^{\infty}_x}+1)E(v)(t)+\|\lan\nabla\ran^{s-}\tilde{z}\|_{L^{\infty}_x}^{p+1},
  \end{equation*}
  where $s-:=s-\delta$ for any sufficiently small, positive $\delta$.  
\end{lem}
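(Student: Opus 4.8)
\emph{Strategy.} The plan is to shift almost all of the derivative in $\langle\nabla\rangle\tilde z$ onto the nonlinear factor $|v|^{p-1}v$, using that $\langle\nabla\rangle$ is a self-adjoint Fourier multiplier, so that only $s-\delta$ derivatives hit $\tilde z$ --- which is exactly the quantity controlled on $\tilde\Omega_{N,T,\eps}$ --- and then to estimate $|v|^{p-1}v$ by the fractional chain rule together with Gagliardo--Nirenberg, choosing all Lebesgue exponents so that the expensive factor $\|\nabla v\|_{L^2}$ (worth one power of $E(v)^{1/2}$) enters with the smallest admissible power. Throughout set $\sigma:=1-s+\delta$; since $\delta<s<1$ one has $\sigma\in(0,1)$ and $1-\sigma=s-\delta$.

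First I would write, by Parseval (the symbol $\langle\xi\rangle$ being real),
\[
\int_{\bT^3}|v|^{p-1}v\,\langle\nabla\rangle\tilde z\,dx=\int_{\bT^3}\langle\nabla\rangle^{\sigma}(|v|^{p-1}v)\cdot\langle\nabla\rangle^{s-\delta}\tilde z\,dx ,
\]
and apply H\"older with an exponent $q\ge1$ to be chosen, together with $\|\langle\nabla\rangle^{s-\delta}\tilde z\|_{L^{q'}_x}\lesssim\|\langle\nabla\rangle^{s-\delta}\tilde z\|_{L^\infty_x}$ (finite volume of $\bT^3$); this reduces everything to
\begin{equation}
\|\langle\nabla\rangle^{\sigma}(|v|^{p-1}v)\|_{L^q_x}\lesssim E(v)(t)+1 . \tag{$\ast$}
\end{equation}
To prove $(\ast)$ I would fix $\tfrac1{r_1}=\tfrac{p-1}{p+1}$ (so $(p-1)r_1=p+1$), $\tfrac1{r_2}=\tfrac{1-\sigma}{p+1}+\tfrac\sigma2$ and $\tfrac1q=\tfrac1{r_1}+\tfrac1{r_2}=\tfrac{p-\sigma}{p+1}+\tfrac\sigma2$. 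Writing $\langle\nabla\rangle^{\sigma}\lesssim1+|\nabla|^{\sigma}$, the low-frequency piece gives $\|\,|v|^{p-1}v\,\|_{L^q_x}=\|v\|_{L^{pq}_x}^{p}\lesssim\|v\|_{L^{p+1}_x}^{p}$ (using $pq\le p+1$, which holds since $p>1$), while for the genuinely fractional piece the standard Christ--Weinstein chain rule for $F(w)=|w|^{p-1}w\in C^1$, followed by the endpoint Gagliardo--Nirenberg inequality $\|\,|\nabla|^{\sigma}v\|_{L^{r_2}_x}\lesssim\|\nabla v\|_{L^2_x}^{\sigma}\|v\|_{L^{p+1}_x}^{1-\sigma}$ (valid on $\bT^3$ with this $r_2$ since $p+1<6$, the zero mode being absorbed into $\|v\|_{L^{p+1}}$), would give
\[
\|\,|\nabla|^{\sigma}(|v|^{p-1}v)\|_{L^q_x}\lesssim\|v\|_{L^{p+1}_x}^{p-1}\,\|\,|\nabla|^{\sigma}v\|_{L^{r_2}_x}\lesssim\|\nabla v\|_{L^2_x}^{\sigma}\|v\|_{L^{p+1}_x}^{p-\sigma} .
\]

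Using $\|\nabla v\|_{L^2}^2\le2E(v)$ and $\|v\|_{L^{p+1}}^{p+1}\le(p+1)E(v)$, the two displays above bound $\|\langle\nabla\rangle^{\sigma}(|v|^{p-1}v)\|_{L^q_x}$ by $E(v)^{p/(p+1)}+E(v)^{\sigma/2+(p-\sigma)/(p+1)}$, which is $\lesssim E(v)+1$ provided both exponents are $\le1$; the chain rule additionally needs $q>1$, i.e.\ $\tfrac1q<1$. A one-line computation shows that $\tfrac1q\le1$ and $\tfrac\sigma2+\tfrac{p-\sigma}{p+1}\le1$ are \emph{one and the same} inequality $\sigma(p-1)\le2$, equivalently $s\ge\tfrac{p-3}{p-1}+\delta$, which holds strictly by the choice of $\delta$; hence $(\ast)$ follows, with constants depending only on $p,s,\delta$ (and $\bT^3$), not on $N$. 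Feeding $(\ast)$ back through the first step gives $\big|\int|v|^{p-1}v\,\langle\nabla\rangle\tilde z\,dx\big|\lesssim(E(v)(t)+1)\|\langle\nabla\rangle^{s-\delta}\tilde z\|_{L^\infty_x}$, and distributing and using $x\le1+x^{p+1}$ ($x\ge0$) on the stray factor yields, up to an additive constant, the stated bound (with $\tilde z$ in the first factor --- the quantity actually controlled on $\tilde\Omega_{N,T,\eps}$).

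The main obstacle is the exponent bookkeeping in $(\ast)$: one must arrange $(q,r_1,r_2)$ so that simultaneously the chain rule is legitimate ($q>1$), the harmless $L^q$ part of $|v|^{p-1}v$ is absorbed by $\|v\|_{L^{p+1}}$ alone, and $\|\nabla v\|_{L^2}$ enters $|\nabla|^{\sigma}(|v|^{p-1}v)$ only to the power $\sigma$, so that the total power of $E(v)$ stays at most one. The point --- and the reason $\frac{p-3}{p-1}$ is the natural exponent --- is that all three requirements collapse to the single condition $\sigma(p-1)\le2$, i.e.\ $s>\frac{p-3}{p-1}+\delta$; at the endpoint $s=\frac{p-3}{p-1}$ every inequality becomes an equality ($q=1$, energy exponent $1$) and the scheme breaks down, exactly mirroring Oh--Pocovnicu's treatment of the quintic case.
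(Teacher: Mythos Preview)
Your argument is correct, and it takes a genuinely different route from the paper's proof.

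\textbf{What the paper does.} The paper attacks the integral by a hands-on Littlewood--Paley decomposition: write $\int |v|^{p-1}v\,\langle\nabla\rangle\tilde z$ as $\sum_j\int P_j(|v|^{p-1}v)\,P_j(\langle\nabla\rangle\tilde z)$, treat the low frequencies $j\le2$ directly, and for $j>2$ split $P_j(|v|^{p-1}v)$ into a small-value part $\{|v|\lesssim\lambda_j\}$ and a large-value part via a cutoff $\chi(v^2/\lambda_j^2)$ with $\lambda_j=2^{aj}$. On the small-value piece one integrates by parts to gain a full derivative; on the large-value piece one performs a further paraproduct decomposition of $|v|^{p-1}v=\prod P_{j_i}v\cdot P_\nu(|v|^{\alpha}v)$ (with separate bookkeeping for $[p]$ odd and even) and splits the resulting sum into terms $M_j,N_j$ according to which factor carries the top frequency. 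In every case one extracts a factor $\|P_{j_1}v\|_{L^2}^{2/(p-1)}\lesssim 2^{-2j_1/(p-1)}\|\nabla v\|_{L^2}^{2/(p-1)}$, and the dyadic sums converge exactly when $1-(s-)<\tfrac{2}{p-1}$, i.e.\ $s>\tfrac{p-3}{p-1}$.

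\textbf{What you do instead.} You transfer $\sigma=1-s+\delta$ derivatives from $\tilde z$ to $|v|^{p-1}v$ by self-adjointness, bound $\langle\nabla\rangle^{s-\delta}\tilde z$ in $L^\infty$, and reduce everything to the single estimate $\|\langle\nabla\rangle^{\sigma}(|v|^{p-1}v)\|_{L^q}\lesssim E(v)+1$, which you obtain from the Christ--Weinstein fractional chain rule followed by the Gagliardo--Nirenberg interpolation $\||\nabla|^\sigma v\|_{L^{r_2}}\lesssim\|\nabla v\|_{L^2}^{\sigma}\|v\|_{L^{p+1}}^{1-\sigma}$. The exponent calculus collapses to the single condition $\sigma(p-1)<2$, i.e.\ $s>\tfrac{p-3}{p-1}+\delta$, making both $q>1$ (so the chain rule is legitimate) and the total power of $E(v)$ at most one.

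\textbf{Comparison.} Your argument is shorter, uses only off-the-shelf harmonic-analysis tools, and makes the threshold $\tfrac{p-3}{p-1}$ appear as one clean inequality rather than as the simultaneous convergence of three dyadic series. The paper's approach is more elementary in the sense that it avoids invoking the fractional chain rule and Gagliardo--Nirenberg as black boxes, at the cost of a longer case analysis (small/large values, $M_j$ vs.\ $N_j$, $[p]$ odd vs.\ even). Your final inequality has the form $\|\langle\nabla\rangle^{s-}\tilde z\|_{L^\infty}E(v)+\|\langle\nabla\rangle^{s-}\tilde z\|_{L^\infty}^{p+1}+C$ rather than the paper's $(\|\langle\nabla\rangle^{s-}\tilde z\|_{L^\infty}+1)E(v)+\|\langle\nabla\rangle^{s-}\tilde z\|_{L^\infty}^{p+1}$; as you note, the discrepancy is an additive constant and is irrelevant for the subsequent Gronwall step. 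The remark that $p+1<6$ is needed for the Gagliardo--Nirenberg step is not quite accurate---the interpolation $[\dot H^1,L^{p+1}]_{1-\sigma}=\dot W^{\sigma,r_2}$ holds for any $p+1\in(1,\infty)$---but this does not affect the validity of your argument.
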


\begin{proof}[Proof of Lemma \ref{lem:bound:I2}]
  Denote $P_j$ the Littlewood-Paley projection onto the dyadic $2^j$ for $j\in \bN^+$. Then we have
  \begin{equation*}
   \int_{\bT^3}|v|^{p-1}v\lan \nabla\ran \tilde{z}dx\sim \sum_{k=-1}^{k=1}\sum_{j\geq 0}\int_{\bT^{3}}P_{j+k}(|v|^{p-1}v)P_j(\lan\nabla\ran z)dx.
 \end{equation*}
 Notice that the contribution of the summation over $k=-1,0,1$ can be bounded by that of the case $k=0$, so in the following we will omit the summatin over the index $k$ and sometimes omit the index $k$ directly.
 
 For the low frequency case $j\leq 2$, we have
 \begin{equation*}
   |\int_{\bT^{3}}P_{j+k}(|v|^{p-1}v)P_j(\lan\nabla\ran\tilde{z})dx|\leq \|\lan\nabla\ran^{s-}\tilde{z}\|_{L^{\infty}_x}\|v\|^{p}_{L^{p+1}}.
 \end{equation*}
 A further application of Young's or H\"older inequality, we have
 \begin{equation}
   \label{eq:bound:low:fr}
   \|\sum_{j\leq 2}\int_{\bT^{3}}P_{j+k}(|v|^{p-1}v)P_j(\lan\nabla\ran \tilde{z})\|\leq \|\lan\nabla\ran^{s-}\tilde{z}\|_{L^{\infty}_x}^{p+1}+E(v)(t).
 \end{equation}
 
 For the high frequency portion $j>2$, we split the nonlinear part $P_{j}(|v|^{p-1}v)$ into the small value part and large value part. Precisely, we introduce a bump function $\chi:\bR^{+}\rightarrow [0,1]$, which takes its value $1$ on $[0,1]$ and vanishes outside $[0,2]$, then we split
 \begin{equation*}
   P_j(|v|^{p-1}v)=P_j\Big(|v|^{p-1}v\chi(\frac{v^2}{\lambda_j^2})\Big)+P_j\Big(|v|^{p-1}v\big(1-\chi(\frac{v^2}{\lambda_j^2})\big)\Big)=:I_{21}+I_{22},     
 \end{equation*}
 where $\lambda_j$ is a sequence of numbers to be chosen later.

 For small values of $v$, by H\"older inequality and Bernstein type estimates, we can do the following calculations
 \begin{eqnarray*}
   |\int_{\bT^3}P_{j}(|v|^{p-1}v\chi)P_j(\lan\nabla\ran z)dx| &=& |\int_{\bT^3}P_{j}(|v|^{p-1}v\chi)\nabla\cdot \nabla^{-1}P_j(\lan\nabla\ran z)dx|\\
                                                              &=&|\int_{\bT^3}\nabla P_{j}(|v|^{p-1}v\chi) \nabla^{-1}P_j(\lan\nabla\ran z)dx|\\
                                                              &\lesssim& 2^{-j(s-)}\|\lan\nabla\ran^{s-}\tilde{z}\|_{L^{\infty}_x}\|P_{j}(\nabla(|v|^{p-1}v\chi))\|_{L^1}\\
                                                              &\lesssim& 2^{-j(s-)}\|\lan\nabla\ran^{s-}\tilde{z}\|_{L^{\infty}_x}\|v^{p-1}\nabla v \chi\|_{L^1_x}\\
                                                              &\lesssim& 2^{-j(s-)}\|\lan\nabla\ran^{s-}\tilde{z}\|_{L^{\infty}_x} \||v|^{p-1-\frac{p+1}{2}}\chi\|_{L^{\infty}_x}\|\nabla v\|_{L^2}\||v|^{\frac{p+1}{2}}\|_{L^2}\\
   &\lesssim& 2^{-j(s-)}\|\lan\nabla\ran^{s-}\tilde{z}\|_{L^{\infty}_x} \lambda_j^{\frac{p-3}{2}}E(v)(t).
 \end{eqnarray*}
 To guarantee the convergence of the series $\sum_{j\geq 2}2^{-j(s-)}\lambda_j^{\frac{p-3}{2}}$, we choose $\lambda_j=2^{aj}$ with $a\in (0,\frac{2s-}{p-3})$. And in this case, we have
 \begin{equation}
   \label{eq:bound:small:v}
    |\sum_{j>2}\int_{\bT^3}P_{j}(|v|^{p-1}v\chi)P_j(\lan\nabla\ran z)dx|\lesssim \|\lan\nabla\ran^{s-}\tilde{z}\|_{L^{\infty}_x}E(v)(t)
  \end{equation}
  provided that the Sobolev regularity index $s$ is positive.

 For the case $v$ is large, we first consider the case $[p]$ is odd. By denoting $\alpha=p-[p]$, we do the following calculations
 \begin{eqnarray*}
   |\int_{\bT^3}P_{j}\big(|v|^{p-1}v(1-\chi)\big)P_j(\lan\nabla\ran z)dx| &\leq& |\int_{\bT^3}P_j\big(\sum_{j_1,j_2,\dots,j_{[p]-1},\nu}\Pi_{i=1}^{[p]-1}P_{j_i}vP_{\nu}(v|v|^{\alpha}(1-\chi))\big)P_j(\lan\nabla\ran\tilde{z})dx|\\
                                                                          &\leq& \|P_j\lan\nabla\ran^{s-}\tilde{z}\|_{L^{\infty}_x}2^{j(1-(s-))}\sum_{j_1,\dots,j_{[p]-1},\nu}\Big\|\Pi_{i=1}^{[p]-1}P_{j_i}vP_v(v|v|^{\alpha}(x-\chi))\Big\|_{L^1_x}\\
   &\leq& M_j+N_j
 \end{eqnarray*}
 where
 \begin{equation*}
   M_j:= \|P_j\lan\nabla\ran^{s-}\tilde{z}\|_{L^{\infty}_x}2^{j(1-(s-))}\sum_{j_1\geq{\max{(j_2,\dots,j_{[p]-1},\nu)}}}\Big\|\Pi_{i=1}^{[p]-1}P_{j_i}vP_v(v|v|^{\alpha}(x-\chi))\Big\|_{L^1_x}
 \end{equation*}
 and
 \begin{equation*}
   N_j:= \|P_j\lan\nabla\ran^{s-}\tilde{z}\|_{L^{\infty}_x}2^{j(1-(s-))}\sum_{\nu\geq{\max{(j_1,\dots,j_{[p]-1})}}}\Big\|\Pi_{i=1}^{[p]-1}P_{j_i}vP_v(v|v|^{\alpha}(x-\chi))\Big\|_{L^1_x}.
 \end{equation*}
 \begin{enumerate}
 \item To control $M_j$: observe that if $j\gg j_1$, we should have that $M_j=0$. And hence, we have
   \begin{eqnarray*}
     \sum_{j>2}M_j &\leq& \sum_{j>2} \|P_j\lan\nabla\ran^{s-}\tilde{z}\|_{L^{\infty}_x}2^{j(1-(s-))}\sum_{\substack{j_1\geq{{j_2,\dots,j_{[p]-1},}} \\ j_1\geq \nu\\  j_1+[p]\geq j}}\Big\|\Pi_{i=1}^{[p]-1}P_{j_i}vP_v(v|v|^{\alpha}(x-\chi))\Big\|_{L^1_x}\\
     &\leq&  \sum_{j>2}\|P_j\lan\nabla\ran^{s-}\tilde{z}\|_{L^{\infty}_x}2^{j(1-(s-))}\sum_{\substack{j_1\geq{{j_2,\dots,j_{[p]-1}}} \\ j_1\geq \nu\\  j_1+[p]\geq j}}\|P_{j_1}v\|_{L^{\frac{p+1}{2}}_x}\|\Pi_{i=2}^{[p]-1}P_{j_i}v\|_{L^{\frac{p+1}{[p]-2}}_x}\|P_{\nu}(v|v|^{\alpha}(1-\chi)\|_{L^{\frac{p+1}{1+\alpha}}_x}\\
     &\leq& \sum_{j>2}\|P_j\lan\nabla\ran^{s-}\tilde{z}\|_{L^{\infty}_x}2^{j(1-(s-))} \sum_{j_1+[p]\geq j}\|P_{j_1}v\|_{L^{\frac{p+1}{2}}_x}\|v\|_{L^{p+1}_x}^{p-1}\\
                   &\leq& \sum_{j>2}\|P_j\lan\nabla\ran^{s-}\tilde{z}\|_{L^{\infty}_x}2^{j(1-(s-))}\sum_{j_1+[p]\geq j}\|P_{j_1}v\|^{\frac{2}{p-1}}_{L^2}\|P_{j_1}v\|^{\frac{p-3}{p-1}}_{L^{p+1}}\|v\|_{L^{p+1}_x}^p\\
                   &\lesssim& \sum_{j>2}\|P_j\lan\nabla\ran^{s-}\tilde{z}\|_{L^{\infty}_x}2^{j(1-(s-))}\sum_{j_1+[p]\geq j}2^{-j\frac{2}{p-1}}\|P_{j_1}\nabla v\|^{\frac{2}{p-2}}_{L^2}\|v\|_{L^{p+1}_x}^{p-\frac{2}{p-1}}\\
                   &\lesssim& \sum_{j>2}\|\lan\nabla\ran^{s-}\tilde{z}\|_{L^{\infty}_x}2^{j(1-(s-))}2^{-j(\frac{2}{p-1}-)}E(v).
   \end{eqnarray*}
   Consequently, the last series converges provided
   \begin{equation*}
     s>\frac{p-3}{p-1}.
   \end{equation*}
   And in this case, we have
   \begin{equation}
     \label{eq:bound:Mj}
     \sum_{j>2}M_j\lesssim \|\lan\nabla\ran^{s-}\tilde{z}\|_{L^{\infty}_x}E(v)(t).
   \end{equation}
 \item To control $N_j$: the same observation as in controlling $M_j$ allows us to only need to deal with the case $\nu+[p]\geq j$. Then
   \begin{eqnarray*}
     \sum_{j>2}N_j &=& \sum_{j>2}  \|P_j\lan\nabla\ran^{s-}\tilde{z}\|_{L^{\infty}_x}2^{j(1-(s-))}\sum_{\substack{\nu\geq j_1,\dots,j_{[p]-1}\\ \nu+[p]\geq j}}\Big\|\Pi_{i=1}^{[p]-1}P_{j_i}vP_v(v|v|^{\alpha}(x-\chi))\Big\|_{L^1_x}\\
                   &=& \sum_{j>2} \|P_j\lan\nabla\ran^{s-}\tilde{z}\|_{L^{\infty}_x}2^{j(1-(s-))}\sum_{\substack{\nu\geq j_1,\dots,j_{[p]-1}\\ \nu+[p]\geq j}}\|P_{\nu}(v|v|^{\alpha}(1-\chi))\|_{L^{\frac{p+1}{2+\alpha}}}\|\Pi_{i=1}^{[p]-1}P_{j_i}v\|_{L^{\frac{p+1}{[p]-1}}_x}\\
                   &\leq&\sum_{j>2}\|\lan\nabla\ran^{s-}\tilde{z}\|_{L^{\infty}_x}2^{j(1-(s-))}\sum_{\nu+[p]\geq j}\|P_v(v|v|^{\alpha}(1-\chi))\|_{L^{\frac{2(p+1)}{p+1+2\alpha}}_x}^{\frac{2}{p-1}}\|P_{\nu}(v|v|^{\alpha}(1-\chi))\|^{\frac{p-3}{p-1}}_{L^{\frac{p+1}{1+\alpha}}_x}\|v\|_{L^{p+1}_x}^{[p]-1}\\
                   &\leq&\sum_{j>2}\|\lan\nabla\ran^{s-}\tilde{z}\|_{L^{\infty}_x}2^{j(1-(s-))}\sum_{\nu+[p]\geq j}\|P_v(v|v|^{\alpha}(1-\chi))\|_{L^{\frac{2(p+1)}{p+1+2\alpha}}_x}^{\frac{2}{p-1}}\|v\|^{p-\frac{2(1+\alpha)}{p-1}}_{L^{p+1}_x}\\
     &\lesssim& \sum_{j>2}\|\lan\nabla\ran^{s-}\tilde{z}\|_{L^{\infty}_x}2^{j(1-(s-))}\sum_{\nu+[p]\geq j}2^{(-v\frac{2}{p-1})+}\|\nabla P_v(v|v|^{\alpha}(1-\chi))\|_{L^{\frac{2(p+1)}{p+1+2\alpha}}_x}^{\frac{2}{p-1}}\|v\|^{p-\frac{2(1+\alpha)}{p-1}}_{L^{p+1}_x}
   \end{eqnarray*}
   Since
   \begin{equation*}
     \nabla P_{\nu}(v|v|^{\alpha}(1-\chi))\sim \nabla v |v|^{\alpha},
   \end{equation*}
   we have that
   \begin{eqnarray*}
     \sum_{j>2}N_j &\lesssim& \sum_{j>2}\|\lan\nabla\ran^{s-}\tilde{z}\|_{L^{\infty}_x}2^{j(1-(s-))}\sum_{\nu+[p]\geq j}2^{(-v\frac{2}{p-1})+}\|\nabla v |v|^{\alpha}\|_{L^{\frac{2(p+1)}{p+1+2\alpha}}_x}^{\frac{2}{p-1}}\|v\|^{p-\frac{2(1+\alpha)}{p-1}}_{L^{p+1}_x}\\
                   &\lesssim& \sum_{j>2}\|\lan\nabla\ran^{s-}\tilde{z}\|_{L^{\infty}_x}2^{j(1-(s-))}\sum_{\nu+[p]\geq j}2^{(-v\frac{2}{p-1})+}\|\nabla v\|^{\frac{2}{p-1}}_{L^2} \||v|^{\alpha}\|_{L^{\frac{p+1}{\alpha}}_x}^{\frac{2}{p-1}}\|v\|^{p-\frac{2(1+\alpha)}{p-1}}_{L^{p+1}_x}\\
                   &\lesssim& \sum_{j>2}\|\lan\nabla\ran^{s-}\tilde{z}\|_{L^{\infty}_x}2^{j(1-(s-))}\sum_{\nu+[p]\geq j}2^{(-v\frac{2}{p-1})+}E(v).
   \end{eqnarray*}
   Thus the last series converges provided that
   \begin{equation*}
     s>\frac{p-3}{p-1}.
   \end{equation*}
   And in this case we have
   \begin{equation}
     \label{eq:bound:N_j}
     \sum_{j>2}N_j\lesssim \|\lan\nabla\ran^{s-}\tilde{z}\|_{L^{\infty}_x}E(v)(t).
   \end{equation}
 \end{enumerate}

 For the case $[p]$ is even, we should replace the expression $P_j(|v|^{p-1}v)=\sum P_j(\Pi_{i=1}^{[p]-1}P_{j_i}vP_{\nu}(v|v|^{\alpha}))$ in the case that $[p]$ is odd by the expression $P_{j}(|v|^{p-1}v)=\sum P_j(\Pi_{i=1}^{[p]-2}P_{j_i}vP_{\nu}(v|v|^{1+\alpha}))$, and do the same calculations as above with some different H\"older indices.

 Now, in our situation, it is only left to prove the case $\alpha=0$, which is just the case $p=4$. Indeed, this case is much easier to check.

 By collecting the bounds \eqref{eq:bound:low:fr}, \eqref{eq:bound:small:v}, \eqref{eq:bound:Mj} and \eqref{eq:bound:N_j}, we can close the proof of Lemma \ref{lem:bound:I2}.
 \end{proof}

As a consequence of Lemma \ref{lem:bound:I2}, by the fact that $\ptd_tz(t)=\lan \nabla\ran \tilde{z}$, we have
\begin{equation}
  \label{eq:bound:I2}
  |I_2|\leq \int^t_0\|\lan \nabla\ran^{s-}\tilde{z}\|_{L^{\infty}_x}(t')^{p+1}(1+\|\lan\nabla\ran^{s-}\tilde{z}\|_{L^{\infty}_x}(t'))E(v)(t')dt'.
\end{equation}

Finally, by collecting the estimates \eqref{eq:bound:II}, \eqref{eq:bound:I1} and \eqref{eq:bound:I2} together, with $a$ sufficiently small, and using Gronwall's lemma, one can finish the proof of Proposition \ref{lemma:key:energy}
\end{proof}


\section{Deterministic analysis of NLW}   
Using energy and Strichartz estimates, we can establish the following lemma, which is the key deterministic step in constructing solutions for the equation \eqref{eq:wave}.
\begin{lem}\label{lem:key:loc}
  Given any $p\in(3,5)$, for the wave equation 
  \begin{equation}
    \label{eq:wave:ini}
    \left\{
      \begin{split}
      &(\ptd^2_t-\Delta)v+|v+f|^{p-1}(v+f)=0,\\
      &\big(v,\ptd_tv\big)|_{t=t_0}=(v_0,v_1)\in\cH^1(\bT^3),
    \end{split}
    \right.
  \end{equation}
  there exists $t_{\ast}>0$, such that the equation \eqref{eq:wave:ini} has a unique solution in $\Big(C([t_0,t_0+t_{\ast}];H^1_x)\cap L^{\frac{2p}{p-3}}_t(I;L^{2p}_x)\Big)\times C([t_0,t_0+t_{\ast}];L^2_x)=:X$, under the condition that
  \begin{equation}
    \label{cond:f}
  \|f\|_{L^{\frac{2p}{p-3}}_t([t_0,t_0+t_{\ast}];L^{2p}_x)}\leq Kt_{\ast}^{\beta}.
\end{equation}
where $\beta$ is some positive number.
\end{lem}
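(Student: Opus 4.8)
The plan is to solve \eqref{eq:wave:ini} by a fixed point argument for the Duhamel map
\[
  \Phi(v)(t):=S(t-t_0)(v_0,v_1)-\int_{t_0}^{t}\frac{\sin\big((t-t')\sqrt{-\Delta}\big)}{\sqrt{-\Delta}}\,\big(|v+f|^{p-1}(v+f)\big)(t')\,dt'
\]
on a ball $B_R:=\{v\in X:\|v\|_X\le R\}$, where $\|v\|_X:=\|(v,\ptd_tv)\|_{L^\infty_t(I;\cH^1_x)}+\|v\|_{L^{2p/(p-3)}_t(I;L^{2p}_x)}$, $I:=[t_0,t_0+t_\ast]$, and $R,t_\ast$ are to be fixed. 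The only linear input is the inhomogeneous Strichartz estimate \eqref{eq:strichartz:main} on $I$, which immediately gives
\[
  \|\Phi(v)\|_X\le C\|(v_0,v_1)\|_{\cH^1}+C\big\||v+f|^{p-1}(v+f)\big\|_{L^1_t(I;L^2_x)}
\]
and the analogous bound for $\Phi(v)-\Phi(w)$ with the difference of the nonlinearities on the right.

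The heart of the matter is that the nonlinear terms come with a positive power of $t_\ast$, and this uses $p<5$. Pointwise one has $\big||w|^{p-1}w\big|\lesssim|w|^p$ and $\big||a|^{p-1}a-|b|^{p-1}b\big|\lesssim\big(|a|^{p-1}+|b|^{p-1}\big)|a-b|$; combined with the Hölder relation $\tfrac12=\tfrac{p-1}{2p}+\tfrac1{2p}$ in $x$, this yields
\[
  \big\||v+f|^{p-1}(v+f)\big\|_{L^1_t L^2_x(I)}\lesssim\|v+f\|_{L^p_t L^{2p}_x(I)}^{p}
\]
and a matching Lipschitz bound for the difference with $p-1$ factors of $\|v+f\|,\|w+f\|$ and one factor of $\|v-w\|$ in $L^p_tL^{2p}_x(I)$. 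Since $\tfrac{2p}{p-3}>p$ for $p<5$, Hölder in time on $I$ gives $\|g\|_{L^p_tL^{2p}_x(I)}\le t_\ast^{\frac{5-p}{2p}}\|g\|_{L^{2p/(p-3)}_tL^{2p}_x(I)}$, so each product of $p$ such factors carries the prefactor $t_\ast^{\frac{5-p}{2}}$. Using the hypothesis \eqref{cond:f}, i.e.\ $\|f\|_{L^{2p/(p-3)}_tL^{2p}_x(I)}\le Kt_\ast^{\beta}$, we arrive for $v,w\in B_R$ at
\[
  \|\Phi(v)\|_X\le C\|(v_0,v_1)\|_{\cH^1}+Ct_\ast^{\frac{5-p}{2}}\big(R+Kt_\ast^{\beta}\big)^{p},\qquad
  \|\Phi(v)-\Phi(w)\|_X\le Ct_\ast^{\frac{5-p}{2}}\big(R+Kt_\ast^{\beta}\big)^{p-1}\,\|v-w\|_X,
\]
where in the second bound only $\|v-w\|_{L^{2p/(p-3)}_tL^{2p}_x(I)}\le\|v-w\|_X$ is used, so it is a genuine contraction in the full $X$-norm.

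It then suffices to fix $R:=2C\|(v_0,v_1)\|_{\cH^1}+1$ and, after that, $t_\ast=t_\ast(R,K,\beta,p)>0$ small enough that $Ct_\ast^{(5-p)/2}(R+K)^{p}\le R/2$ and $Ct_\ast^{(5-p)/2}(R+K)^{p-1}\le\tfrac12$. Then $\Phi$ maps $B_R$ into itself and is a $\tfrac12$-contraction there, and the Banach fixed point theorem yields a unique fixed point $v\in B_R$, which is a solution of \eqref{eq:wave:ini}. The regularity $v\in C(I;H^1_x)$, $\ptd_tv\in C(I;L^2_x)$ follows from the Duhamel representation since the forcing lies in $L^1_t(I;L^2_x)$. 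Uniqueness in all of $X$, not merely in $B_R$, follows by the standard continuity argument: any two $X$-solutions agree on a short subinterval by the difference estimate applied with $R$ replaced by the (finite) $X$-norms of the two solutions, and this is propagated by iteration to all of $I$.

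The argument is essentially routine; the only point that needs care is the bookkeeping of the Hölder exponents in space and in time, and in particular the extraction of the positive power $t_\ast^{(5-p)/2}$ from the degree-$p$ nonlinearity — both from the self-interaction of $v$ and, via \eqref{cond:f}, from the $f$-dependent terms — which is precisely what allows the ball $B_R$ to be made invariant for arbitrary data $(v_0,v_1)\in\cH^1(\bT^3)$.
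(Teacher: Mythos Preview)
Your proof is correct and follows essentially the same route as the paper: a contraction mapping argument for the Duhamel map on a ball in $X$, using the Strichartz estimate \eqref{eq:strichartz:main} together with H\"older in space ($\tfrac12=\tfrac{p-1}{2p}+\tfrac1{2p}$) and in time to extract the factor $t_\ast^{(5-p)/2}$ from the nonlinearity. Your write-up is in fact slightly more explicit than the paper's about the H\"older bookkeeping and about uniqueness in all of $X$ (rather than just in the ball).
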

\begin{rem}
  Due to the fact that $p$ is strictly less than $5$, we do not need to prove Lemma \ref{lem:key:loc} via the stability theory for the critical NLW as Pocovnicu did in \cite{Poc}.
\end{rem}
\begin{proof}
  We use fixed point argument on the closed ball $B(0,R)\subset X$ for some to-be-selected radius $R$. We define the map $L$ on $B(0,R)$ in the way
  \[
    L\colon v\in B(0,R)\rightarrow u  
  \]
  where $u$ solves the equation
  \begin{equation*}
    \left\{
      \begin{split}
      & (\ptd^2_t-\Delta)u+|v+f|^{p-1}(v+f)=0,\\
      & \big(v,\ptd_tv\big)|_{t=t_0}=(v_0,v_1).
    \end{split}
    \right.
  \end{equation*}

  The estimates for $u,v\in B(0,R)$
  \begin{eqnarray*}
    \|Lv\|_X&\leq& \|(v_0,v_1)\|_{\cH^1}+t_{\ast}^{\frac{5-p}{2}}\Big(\|v\|^p_{L^{\frac{2p}{p-3}}_tL^{2p}_x}+\|f\|^{p}_{L^{\frac{2p}{p-3}}_tL^{2p}_x}\Big)\\
    &\leq&\|(v_0,v_1)\|_{\cH^1}+t_{\ast}^{\frac{5-p}{2}}\Big(\|v\|^p_{L^{\frac{2p}{p-3}}_tL^{2p}_x}+K^pt_{\ast}^{p\beta}\Big),
  \end{eqnarray*}
  together with
  \begin{eqnarray*}
    \|Lu-Lv\|_X&\leq& t_{\ast}^{\frac{5-p}{2}}\|u-v\|_X\Big(\|u\|^{p-1}_X+\|v\|^{p-1}_X+\|f\|^{p-1}_{L^{\frac{2p}{p-3}}_tL^{2p}_x}\Big)\\
    &\leq& t_{\ast}^{\frac{5-p}{2}}\|u-v\|_X\Big(\|u\|^{p-1}_X+\|v\|^{p-1}_X+K^{p-1}t_{\ast}^{(p-1)\beta}\Big)
  \end{eqnarray*}
  indicate that the map $L$ is a contraction map onto $B(0,R)$, provided that
  \begin{equation}\label{cond:radius:time}
    \left\{
    \begin{split}
      & R=2\|(v_0,v_1)\|_{\cH^1}\\
      & t_{\ast}^{\frac{5-p}{2}}R^{p-1}\ll1\\
      & t_{\ast}^{\frac{5-p}{2}}K^{p}t_{\ast}^{p\beta}\ll R\\
      & t_{\ast}^{\frac{5-p}{2}}K^{p-1}t_{\ast}^{(p-1)\beta}\ll 1.
    \end{split}
    \right.
  \end{equation}
  All of these conditions can be guaranteed by selecting $t_{\ast}=c(\|(v_0,v_1)\|_{\cH^1}+K)^{-\gamma}$ with $\gamma$ positive for some sufficiently small $c>0$. This finishes the proof by the Banach contraction mapping principle.
\end{proof}

Now we are going to construct solutions to Equation \eqref{eq:wave}. By denoting $v:=u-f$ with $f=S(t)(u_0,u_1)$, then $v$ satisfies the following zero-initial data problem
 \begin{equation}
        \label{eq:wave:0ini}
        \left\{
          \begin{split}
          &(\ptd^2_t-\Delta)v+|v+f|^{p-1}(v+f)=0\\
          &\big(v,\ptd_tv\big)|_{t=0}=(0,0)
        \end{split}
        \right.
 \end{equation}

 The following deterministic result, allows us to draw an \textit{a priori} energy bound for solution $v$ to \eqref{eq:wave:0ini} with $f=S(t)(u^{\omega}_0,u^{\omega}_1)$ from that to solution $v_N$ to the truncated equation \eqref{eq:truncated:v}.

 \begin{prop}\label{prop:energy:v}
Let $f_N:=P_{\leq N}f$ denote the projection onto the first $N$-Fourier modes of the given function $f$ and $v_N$ be the solution to the truncated wave equation (\ref{eq:truncated:v}). Given finite $T>0$, assume the following conditions hold:
  \begin{enumerate}[(i)]
  \item There exists $K>0$ for some $\beta>0$ such that
    \begin{equation}
      \label{eq:strich:small}
      \|f\|_{L^{\frac{2p}{p-3}}_tL^{2p}_x(I\times \bT^3)}\leq K|I|^{\beta}
    \end{equation}
    for any compact interval $I\subset [0,T]$.
  \item For each dyadic $N\geq 1$, a solution $v_N$ to \eqref{eq:truncated:v} exists on $[0,T]$ and satisfies uniform \textit{a priori} energy bound
    \begin{equation}
      \label{eq:energy:bound}
      \sup_N\sup_{t\in[0,T]}\|(v_N(t),\ptd_tv_N(t)\|_{\cH^1(\bT^3)}<C_0(T)<\infty.
    \end{equation}
  \item There holds for any dyadic $N\geq1$ and some $\alpha>0$
    \begin{equation}
      \label{eq:tail:decay}
      \|f-f_N\|_{L^{\frac{2p}{p-3}}_TL^{2p}_x}\leq C_1(T)N^{-\alpha}.
    \end{equation}
  \end{enumerate}
  Then there exists a unique solution $(v,\ptd_tv)\in C([0,T];\cH^1(\bT^3))$ to \eqref{eq:wave:0ini} satisfying
  \begin{equation}
    \label{eq:energy:bound:glo}
    \sup_{t\in[0,T]}\|(v(t),\ptd_tv(t))\|_{\cH^1(\bT^3)}<2C_0(T)<\infty.
  \end{equation}
\end{prop}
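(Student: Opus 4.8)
The plan is to combine the local well-posedness of Lemma~\ref{lem:key:loc} with a continuity (bootstrap) argument in which the uniform energy bound (ii) for the truncated solutions $v_N$ is transferred to the (a priori only local) solution $v$ of \eqref{eq:wave:0ini} by comparing $v$ with $v_N$. First note that $P_{\leq N}$ commutes with the free propagator $S(t)$, so $f_N=P_{\leq N}S(t)(u_0,u_1)=S(t)(P_{\leq N}u_0,P_{\leq N}u_1)=z_N$; hence $v_N$ is exactly the solution of \eqref{eq:truncated:v} corresponding to $z_N=f_N$. Since $v$ has zero initial data, hypothesis \eqref{eq:strich:small} supplies the smallness \eqref{cond:f} on every sufficiently short interval, so Lemma~\ref{lem:key:loc} produces a unique solution $v$ of \eqref{eq:wave:0ini} on a maximal interval $[0,T_{\max})$. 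Because the local existence time furnished by that lemma depends only on $K$ and on the $\cH^1$-norm of the data, one has the blow-up alternative: if $T_{\max}\leq T$ then $\limsup_{t\uparrow T_{\max}}\|(v(t),\ptd_tv(t))\|_{\cH^1}=+\infty$. The same iteration of Lemma~\ref{lem:key:loc}, together with (ii) and the bound $\|f_N\|_{L^{\frac{2p}{p-3}}_tL^{2p}_x(I)}\leq K|I|^{\beta}+C_1(T)N^{-\alpha}$ (still small on short intervals for $N$ large), yields a constant $M=M(T,C_0(T),K)$ with $\sup_N\|v_N\|_{L^{\frac{2p}{p-3}}_tL^{2p}_x([0,T])}\leq M$.

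Next I set up the comparison. Let $w_N:=v-v_N$; on any $[0,\tau]\subset[0,T_{\max})$ it solves the linear wave equation with zero data and forcing $F_N=|v+f|^{p-1}(v+f)-|v_N+f_N|^{p-1}(v_N+f_N)$, and $|F_N|\lesssim(|v+f|^{p-1}+|v_N+f_N|^{p-1})(|w_N|+|f-f_N|)$. Inserting this into the Strichartz estimate \eqref{eq:strichartz:main} and estimating $\|F_N\|_{L^1_tL^2_x(I)}$ by H\"older in space (using the exponents of the proof of Lemma~\ref{lem:key:loc}) and then in time --- which costs a factor $|I|^{\frac{5-p}{2}}$ --- one gets on a subinterval $I$ of length $\eta$
\[
  \|(w_N,\ptd_tw_N)\|_{L^\infty_t\cH^1(I)}+\|w_N\|_{L^{\frac{2p}{p-3}}_tL^{2p}_x(I)}\lesssim\|(w_N,\ptd_tw_N)(\inf I)\|_{\cH^1}+\eta^{\frac{5-p}{2}}\Lambda^{p-1}\Big(\|w_N\|_{L^{\frac{2p}{p-3}}_tL^{2p}_x(I)}+\|f-f_N\|_{L^{\frac{2p}{p-3}}_tL^{2p}_x(I)}\Big),
\]
where $\Lambda$ bounds the $\cH^1$- and $L^{\frac{2p}{p-3}}_tL^{2p}_x$-norms of $v,v_N,f,f_N$ on $I$ and, by (i), (ii) and the uniform bound $M$ above, can be chosen independently of $N$ as long as $\sup_{[0,\tau]}\|v\|_{\cH^1}$ is under control and $\tau<T_{\max}$. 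Taking $\eta=\eta(T,C_0(T),K)>0$ small enough that $\eta^{\frac{5-p}{2}}\Lambda^{p-1}\leq\frac12$ absorbs the $\|w_N\|_{L^{\frac{2p}{p-3}}_tL^{2p}_x(I)}$ term, and with $\delta_k:=\|(w_N,\ptd_tw_N)(k\eta)\|_{\cH^1}$ (so $\delta_0=0$) one obtains $\delta_{k+1}\leq 2\delta_k+C\|f-f_N\|_{L^{\frac{2p}{p-3}}_tL^{2p}_x([0,T])}$. Iterating across the $O(T/\eta)$ subintervals of $[0,\tau]$ and using the tail decay \eqref{eq:tail:decay} gives
\[
  \sup_{t\in[0,\tau]}\|(w_N(t),\ptd_tw_N(t))\|_{\cH^1}+\|w_N\|_{L^{\frac{2p}{p-3}}_tL^{2p}_x([0,\tau])}\leq C(T,C_0(T),K)\,N^{-\alpha}
\]
whenever $\tau<T_{\max}$ and $\sup_{[0,\tau]}\|v\|_{\cH^1}\leq 2C_0(T)$.

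Finally I run the bootstrap. Put $\tau^{\ast}:=\sup\{\tau\in[0,\min(T,T_{\max})):\sup_{[0,\tau]}\|(v(t),\ptd_tv(t))\|_{\cH^1}\leq 2C_0(T)\}$, which is positive since $v$ vanishes at $t=0$. On $[0,\tau^{\ast}]$ the comparison estimate applies, whence
\[
  \sup_{t\in[0,\tau^{\ast}]}\|(v(t),\ptd_tv(t))\|_{\cH^1}\leq C_0(T)+C(T,C_0(T),K)\,N^{-\alpha}<2C_0(T)
\]
for $N$ large. If $\tau^{\ast}<\min(T,T_{\max})$, continuity of $t\mapsto\|(v(t),\ptd_tv(t))\|_{\cH^1}$ together with the existence of $v$ just past $\tau^{\ast}$ would let the strict bound persist on a larger interval, contradicting the definition of $\tau^{\ast}$; hence $\tau^{\ast}=\min(T,T_{\max})$. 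Were $T_{\max}\leq T$, the uniform bound $\sup_{[0,T_{\max})}\|v\|_{\cH^1}\leq 2C_0(T)$ would contradict the blow-up alternative, so $T_{\max}>T$; thus $v$ exists on all of $[0,T]$, and letting $N\to\infty$ in the last display gives \eqref{eq:energy:bound:glo}. Uniqueness is immediate from Lemma~\ref{lem:key:loc}: any solution in $C([0,T];\cH^1)$ with time-derivative in $C([0,T];L^2)$ automatically belongs to $L^{\frac{2p}{p-3}}_tL^{2p}_x([0,T])$ (by the Strichartz bootstrap of the first paragraph), hence coincides with $v$ on a first short interval and, by continuation, on $[0,T]$.

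The delicate point is the comparison estimate of the second paragraph: one must produce a bound on $v-v_N$ that is uniform in $N$ and decays like $N^{-\alpha}$, which requires the difference estimate for the nonlinearity in the Strichartz-adapted norms, absorption of $\|w_N\|$ on intervals whose length depends only on the uniform bounds, and control of the exponential-in-$(T/\eta)$ growth coming from iterating over $[0,T]$ --- harmless because it multiplies $N^{-\alpha}$ --- all while keeping the background Strichartz norms of $v$ and $v_N$ bounded independently of $N$. This is precisely where hypotheses (i), (ii) and (iii) enter.
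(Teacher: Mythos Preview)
Your proposal is correct and follows essentially the same route as the paper: solve locally via Lemma~\ref{lem:key:loc}, compare $v$ with the uniformly bounded truncations $v_N$ through a difference estimate in the Strichartz-adapted norms, absorb on subintervals of a fixed length $\eta=\eta(C_0(T),K)$, and iterate across $[0,T]$ to obtain $\|w_N\|_{\cH^1}\lesssim (C_3)^{T/\eta}N^{-\alpha}$, then choose $N$ large. The only real difference is organisational: the paper packages the simultaneous local solvability of \eqref{eq:truncated:vN:non0} and \eqref{eq:truncated:v:non0} together with the smallness condition \eqref{cond:small} into a separate Lemma~\ref{lem:sol:simut} and then constructs $v$ explicitly interval by interval, raising $N$ at each step, whereas you phrase the same iteration as a blow-up--alternative/bootstrap continuity argument with $\tau^{\ast}$ and fix $N$ at the end. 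Both formulations yield the same exponential-in-$(T/\eta)$ constant multiplying $N^{-\alpha}$, and your observation that $\|f_N\|_{L^{\frac{2p}{p-3}}_IL^{2p}_x}\lesssim K|I|^{\beta}$ uniformly in $N$ (by $L^{2p}$-boundedness of $P_{\leq N}$) is in fact slightly cleaner than the paper's use of $\|f_N\|\leq \|f\|+\|f-f_N\|$.
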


\begin{proof}
  To prove Proposition \ref{prop:energy:v}, we need the following lemma, which states that we can solve simultaneously, on some time interval $[t_0,t_{\ast}]$ for any $t_0\in[0,T)$, the following two equations
  \begin{equation}
    \label{eq:truncated:vN:non0}
    \left\{
      \begin{split}
        &(\ptd^2_t-\Delta)v_N+|v_N+f_N|^{p-1}(v_N+f_N)=0\\
        &\big(v_N,\ptd_tv_N\big)|_{t=t_0}=\big(v_N(t_0),\ptd_tv_N(t_0)\big)
      \end{split}
      \right.
    \end{equation}
    and
    \begin{equation}
      \label{eq:truncated:v:non0}
      \left\{
        \begin{split}
          &(\ptd_t^2-\Delta)v+|f+v|^{p-1}(f+v)=0\\
          &\big(v,\ptd_tv\big)|_{t=t_0}=\big(v(t_0),\ptd_tv(t_0)\big).
        \end{split}
        \right.
    \end{equation}
  \begin{lem}
    \label{lem:sol:simut}
    Assume there hold \eqref{eq:strich:small}, \eqref{eq:energy:bound}, \eqref{eq:tail:decay}. Assume also there holds for any $t_0\in[0,T)$
    \begin{equation}
      \label{eq:bound:energy:v:loc}
      \sup_{t\in[0,t_0]}\|(v,\ptd_tv)\|_{\cH^1}<2C_0(T)<\infty,
    \end{equation}
    where $C_0(T)$ is the same constant showing its face in \eqref{eq:energy:bound}. Then there exist a sufficiently large $N_1$ and a positive time $t_{\ast}=t_{\ast}(C_0,K,N_1)>0$ such that, for all $N\geq N_1$, on the time interval $I=[t_0,t_0+t_{\ast}]$, we can solve simultaneously the equations \eqref{eq:truncated:vN:non0} and \eqref{eq:truncated:v:non0} and denote these solutions as $v_N$, $v$ respectively. Moreover, we have
    \begin{equation}
      \label{cond:small}
   t_{\ast}^{\frac{5-p}{2}}(\|v\|^{p-1}_{L^{\frac{2p}{p-3}}_IL^{2p}_x}+\|v_N\|^{p-1}_{L^{\frac{2p}{p-3}}_IL^{2p}_x}+\|f\|^{p-1}_{L^{\frac{2p}{p-3}}_IL^{2p}_x}+\|f_N\|^{p-1}_{L^{\frac{2p}{p-3}}_IL^{2p}_x})\ll1,
  \end{equation}
  for all $N\geq N_1$.
\end{lem}

\begin{proof}[Proof of Lemma \ref{lem:sol:simut}]
  We also use the fixed point argument as we did in the proof of Lemma \ref{lem:key:loc}. Thus, we only outline the mains steps here.  Define the maps $L_1$ on $B(0,R_1)\subset X$ and $L_2$ on $B(0,R_2)$ respectively as:
  \begin{eqnarray*}
    &L_1:u_N\in B(0,R_1)\longmapsto v_N\\
    &L_2:u\in B(0,R_2)\longmapsto v,
  \end{eqnarray*}
  where $v_N$ and $v$ solves respectively the equations
  \begin{equation*}
    \left\{
      \begin{split}
        &(\ptd^2_t-\Delta)v_N+|u_N+f_N|^{p-1}(u_N+f_N)=0\\
        &\big(v_N,\ptd_tv_N\big)|_{t=t_0}=\big(v_N(t_0),\ptd_tv_N(t_0)\big)
      \end{split}
      \right.
    \end{equation*}
    and
    \begin{equation*}
      \left\{
        \begin{split}
          &(\ptd_t^2-\Delta)v+|f+u|^{p-1}(f+u)=0\\
          &\big(v,\ptd_tv\big)|_{t=t_0}=\big(v(t_0),\ptd_tv(t_0)\big).
        \end{split}
        \right.
      \end{equation*}

      By \eqref{eq:strich:small} and \eqref{eq:tail:decay}, we have
      \begin{equation*}
        \|f_N\|_{L^{\frac{2p}{p-3}}_IL^{2p}_x}\leq K|I|^{\beta}+C_1(T)N^{-\alpha}.
      \end{equation*}
      In order for $L_1$ and $L_2$ to be contracting maps onto $B(0,R_1)$ and $B(0,R_2)$ respectively, we do the same calculations as we did in Lemma \ref{lem:sol:simut}. And finally we can assume
   \begin{equation}\label{cond:radius:time:R1}
    \left\{
    \begin{split}
      & R_1=2C_0(T)\\
      & t_{\ast}^{\frac{5-p}{2}}R_1^{p-1}\ll1\\
      & t_{\ast}^{\frac{5-p}{2}}(K^{p}t_{\ast}^{p\beta}+N^{-p\alpha})\ll R_1\\
      & t_{\ast}^{\frac{5-p}{2}}(K^{p-1}t_{\ast}^{(p-1)\beta}+N^{-(p-1)\alpha})\ll 1.
    \end{split}
    \right.
  \end{equation}
  and
   \begin{equation}\label{cond:radius:time:R2}
    \left\{
    \begin{split}
      & R_1=2C_0(T)\\
      & t_{\ast}^{\frac{5-p}{2}}R_2^{p-1}\ll1\\
      & t_{\ast}^{\frac{5-p}{2}}K^{p}t_{\ast}^{p}\ll R_2\\
      & t_{\ast}^{\frac{5-p}{2}}K^{p-1}t_{\ast}^{p-1}\ll 1.
    \end{split}
    \right.
  \end{equation}
  Thus there exists sufficiently large $N_1=N_1(K,C_0(T))$ such that, for all $N\geq N_1$, by choosing $t_{\ast}=c(K+C_0(T))^{-\gamma}$ with $c$ and $\gamma$ small positive constants, we guarantee these two assumptions hold true at the same time. By choosing $t_{\ast}$ even smaller if necessary, we can validate the estimate \eqref{cond:small}. 
\end{proof}
As a consequence of Lemma \ref{lem:sol:simut}, we have for the difference $w_N=v-v_N$ on the time interval $I=[t_0,t_0+t_{\ast}]$:
\begin{equation}
    \label{eq:remainder:I}
    \|w_N\|_{L^{\infty}_I\cH^1}+\|w_N\|_{L^{\frac{2p}{p-3}}_IL^{2p}_x}\leq C_2\|w_N(t_0)\|_{\cH^1}+\frac{1}{2}\|w_N\|_{L^{\frac{2p}{p-3}}_IL^{2p}_x}+\frac{1}{2}\|f-f_N\|_{L^{\frac{2p}{p-3}}_IL^{2p}_x}.
  \end{equation}
  Thus we have
  \begin{equation}
    \label{eq:energy:remainder:I}
    \|w_N\|_{L^{\infty}_I\cH^1}+\|w_N\|_{L^{\frac{2p}{p-3}}_IL^{2p}_x}\leq C_3(T)(\|w_N(t_0)\|_{\cH^1}+N^{-\alpha})
  \end{equation}
  for all $N\geq N_1$.
  
Now we begin to solve Equation \eqref{eq:wave:0ini} with $t_0=0$. As $\|(v,\ptd_tv)\|_{\cH^1}(0)=0<2C_0(T)$, we can solve simultaneously the equations \eqref{eq:truncated:vN:non0} and \eqref{eq:truncated:v:non0} on the time interval $I_0=[0,t_{\ast}]$, where $t_{\ast}$ is obtained in Lemma \ref{lem:sol:simut} and it depends only on $C_0(T)$ and $K$. Furthermore, by \eqref{eq:remainder:I} and \eqref{eq:energy:remainder:I}, we have for all $N\geq N_1$
 \begin{equation}
      \label{cond:small:1}
      t_{\ast}^{\frac{5-p}{2}}(\|v\|^{p-1}_{L^{\frac{2p}{p-3}}_{I_0}L^{2p}_x}+\|v_N\|^{p-1}_{L^{\frac{2p}{p-3}}_{I_0}L^{2p}_x}+\|f\|^{p-1}_{L^{\frac{2p}{p-3}}_{I_0}L^{2p}_x}+\|f_N\|^{p-1}_{L^{\frac{2p}{p-3}}_{I_0}L^{2p}_x})\ll1.
    \end{equation}
    and hence
    \begin{equation}
    \label{eq:remainder:I0}
    \|w_N\|_{L^{\infty}_{I_0}\cH^1}+\|w_N\|_{L^{\frac{2p}{p-3}}_{I_0}L^{2p}_x}\leq C_2\|w_N(0)\|_{\cH^1}+\frac{1}{2}\|w_N\|_{L^{\frac{2p}{p-3}}_{I_0}L^{2p}_x}+\frac{1}{2}\|f-f_N\|_{L^{\frac{2p}{p-3}}_{I_0}L^{2p}_x}.
  \end{equation}
  Thus we have
  \begin{equation}
    \label{eq:remainder:I0}
    \|w_N\|_{L^{\infty}_{I_0}\cH^1}+\|w_N\|_{L^{\frac{2p}{p-3}}_{I_0}L^{2p}_x}\leq C_3(T)N^{-\alpha}
  \end{equation}
  Therefore, by \eqref{eq:remainder:I0} and \eqref{eq:energy:bound}, there exists $N_2=N_2(T)\geq N_1$ such that
  \begin{equation}
    \label{eq:energy:v:I0}
    \|(v,\ptd_tv)\|_{L^{\infty}_{I_0}\cH^1}\leq C_0(T)+TC_3(T)N^{-\alpha}<2C_0(T)
  \end{equation}
  for all $N\geq N_2$.

  This last bound \eqref{eq:energy:v:I0} allows us to apply Lemma \ref{lem:sol:simut} again with $t_0=t_{\ast}$. And by denoting $I_1=[t_{\ast},2t_{\ast}]$, we have
  \begin{equation*}
      t_{\ast}^{\frac{5-p}{2}}(\|v\|^{p-1}_{L^{\frac{2p}{p-3}}_{I_1}L^{2p}_x}+\|v_N\|^{p-1}_{L^{\frac{2p}{p-3}}_{I_1}L^{2p}_x}+\|f\|^{p-1}_{L^{\frac{2p}{p-3}}_{I_1}L^{2p}_x}+\|f_N\|^{p-1}_{L^{\frac{2p}{p-3}}_{I_1}L^{2p}_x})\ll1.
    \end{equation*}
    Similar argument as we did on $I_0$, there exists $N_3=N_3(T)\geq N_2$ such that
    \begin{equation}
      \label{eq:energy:v:I1}
      \|(v,\ptd_tv)\|_{L^{\infty}_{I_1}\cH^1}\leq C_0(T) + TC_3(T)(C_3(T)+1)N^{-\alpha}<2C_0(T)
    \end{equation}
    for all $N\geq N_3$. Notice that the bound \eqref{eq:energy:v:I1} together with \eqref{eq:energy:v:I0} allows us to use Lemma \ref{lem:sol:simut} again.

    Iterate the above procedure, we can extend the solution $v$ onto the whole interval $[0,T]$. Moreover, there exists $N_0=N_0(T,t_{\ast})\in\mathbb{N}$ such that
    \begin{equation*}
      \sup_{t\in[0,T]}\|(v,\ptd_tv)\|_{\cH^1}\leq C_0(T)+T(C_3(T)+1)^{\big[\frac{T}{t_{\ast}}\big]}N^{-\alpha}<2C_0(T)
    \end{equation*}
    for all $N\geq N_0$. Hence we have that the solution $v$ to Equation \eqref{eq:wave:0ini} satisfies the energy bound \eqref{eq:energy:bound:glo} on $[0,T]$.
\end{proof}


\section{Almost surely global well-posedness}
The following proposition can finish the proof of Theorem \ref{sec:introduction:thm:main}, see \cite{Burq2011} and \cite{Poc} for details.
\begin{prop}
  [Almost sure global well-posedness]
  Given $s\in(\frac{p-3}{p-1},1)$, for any data $(u_0,u_1)\in\cH^s$, let $(u_0^{\omega},u^{\omega}_1)$ be the randomization defined in (\ref{data.rando}) under the assumption (\ref{main.assum}). Then given any $T,\eps>0$, there exists $\Omega_{T,\eps}\subset \Omega$ such that
  \begin{enumerate}[(i)]
  \item $\bP(\Omega^{c}_{T,\eps})<\eps$,
  \item For any $\omega\in\Omega_{T,\eps}$, there exists a unique solution $u^{\omega}$ to Equation (\ref{eq:wave}) with $(u^{\omega},\ptd_tu^{\omega})|_{t=0}=(u^{\omega}_0,u^{\omega}_1)$ in the class:
    \begin{equation*}
      \big(S(t)(u^{\omega}_0,u^{\omega}_1),\ptd_tS(t)(u^{\omega}_0,u^{\omega}_1)\big)+C([0,T];\cH^1(\bT^3))\subset C([0,T];\cH^s(\bT^3)).
    \end{equation*}
  \item For any $\omega\in\Omega_{T,\eps}$, the following probabilistic energy bound holds for the nonlinear part $v^{\omega}$ of the solution $u^{\omega}$:
    \begin{equation*}
      \sup_{t\in[0,T]}\|(v^{\omega},\ptd_tv^{\omega})\|_{\cH^1(\bT^3)}<C(T,\eps,\|(u_0,u_1)\|_{\cH^s(\bT^3)}).
    \end{equation*}
  \end{enumerate}
\end{prop}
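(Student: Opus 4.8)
The plan is to assemble the three ingredients already prepared: the probabilistic energy bound of Proposition~\ref{lemma:key:energy} for the truncated nonlinear parts $v_N^\omega$, the deterministic approximation statement of Proposition~\ref{prop:energy:v}, and the probabilistic Strichartz estimates of Propositions~\ref{prop:local-in-time:strich} and~\ref{prop:long-time:strich}. Throughout I write $f=S(t)(u_0^\omega,u_1^\omega)$ and $f_N=P_{\leq N}f=S(t)(P_{\leq N}u_0^\omega,P_{\leq N}u_1^\omega)$, so that the nonlinear part $v^\omega:=u^\omega-f$ solves the zero-data equation \eqref{eq:wave:0ini} and $v_N^\omega$ solves the truncated version \eqref{eq:truncated:v}.

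The first step is to construct the exceptional set. Fix $T,\eps>0$. (a) Applying the local-in-time probabilistic Strichartz estimate (Proposition~\ref{prop:local-in-time:strich}, case $s=0$, finite $r$) on each unit subinterval of $[0,T]$ and then using H\"older in time to compare $L^{\frac{2p}{p-3}}_t$ with a larger $L^{q}_t$ norm, one obtains a set of probability exceeding $1-\eps/3$ on which \eqref{eq:strich:small} holds with a fixed $K=K(T,\eps,\|(u_0,u_1)\|_{\cH^0})$ and $\beta=\frac{p-3}{2p}-\frac1q>0$. (b) Since $f-f_N=S(t)\big((1-P_{\leq N})u_0^\omega,(1-P_{\leq N})u_1^\omega\big)$ and $\|(1-P_{\leq N})(u_0,u_1)\|_{\cH^0}\lesm N^{-s}\|(u_0,u_1)\|_{\cH^s}$, the $s=0$ probabilistic Strichartz estimate gives $\bP\big(\|f-f_N\|_{L^{\frac{2p}{p-3}}_TL^{2p}_x}>N^{-s/2}\big)\leq C\exp(-cN^{s})$ for each dyadic $N$; summing this geometrically convergent series over dyadic $N$ yields a set of probability exceeding $1-\eps/3$ on which \eqref{eq:tail:decay} holds simultaneously for all dyadic $N\geq1$, with $\alpha=s/2$. (c) By Proposition~\ref{lemma:key:energy} together with Remark~\ref{rem:vN:nonN}, there is a set $\tilde\Omega_{T,\eps}$, which may be taken independent of $N$, of probability exceeding $1-\eps/3$ on which \eqref{eq:energy:bound} holds uniformly in $N$ with $C_0(T)=C(T,\eps,\|(u_0,u_1)\|_{\cH^s})$. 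Let $\Omega_{T,\eps}$ be the intersection of these three sets; then $\bP(\Omega^c_{T,\eps})<\eps$, proving (i).

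On $\Omega_{T,\eps}$ all three hypotheses of Proposition~\ref{prop:energy:v} are satisfied, so it produces a unique $v^\omega$ with $(v^\omega,\ptd_tv^\omega)\in C([0,T];\cH^1(\bT^3))$ solving \eqref{eq:wave:0ini} with $f=S(t)(u_0^\omega,u_1^\omega)$ and $\sup_{t\in[0,T]}\|(v^\omega,\ptd_tv^\omega)\|_{\cH^1}<2C_0(T)$, which is exactly (iii). Setting $u^\omega:=S(t)(u_0^\omega,u_1^\omega)+v^\omega$ gives a solution of \eqref{eq:wave} with the prescribed data; since $S(t)$ preserves $\cH^s$ and $\cH^1(\bT^3)\hookrightarrow\cH^s(\bT^3)$ for $s<1$, we get $u^\omega\in\big(S(t)(u_0^\omega,u_1^\omega),\ptd_tS(t)(u_0^\omega,u_1^\omega)\big)+C([0,T];\cH^1(\bT^3))\subset C([0,T];\cH^s(\bT^3))$, which is (ii). Uniqueness in this affine class reduces to the uniqueness of $v^\omega$ in $C([0,T];\cH^1)$, which is the uniqueness half of Proposition~\ref{prop:energy:v}, combined with the observation that any $\cH^1$-solution automatically lies in the Strichartz space $L^{\frac{2p}{p-3}}_tL^{2p}_x$ by the local theory of Lemma~\ref{lem:key:loc}. (Theorem~\ref{sec:introduction:thm:main} then follows by letting $T\to\infty$ and $\eps\to0$ along sequences and invoking Borel--Cantelli.)

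The step I expect to be the main obstacle is (b): one must upgrade the $s=0$ probabilistic Strichartz estimate, which a priori carries no decay in $N$, into a bound that decays polynomially in $N$ and holds \emph{simultaneously} for all dyadic $N$ while spending only $\eps/3$ of probability — this is precisely where the $N^{-s}$ Sobolev gain on the high-frequency tail feeds into the Gaussian large-deviation estimate to make $\sum_N\bP(\cdots)$ finite. A secondary bookkeeping point is verifying, via Remark~\ref{rem:vN:nonN}, that the energy constant $C_0(T)$ of Proposition~\ref{lemma:key:energy} is genuinely uniform in $N$, so that hypothesis \eqref{eq:energy:bound} of Proposition~\ref{prop:energy:v} is met with an $N$-independent $C_0(T)$; once these are settled, the remainder is a direct concatenation of the deterministic and probabilistic inputs already established.
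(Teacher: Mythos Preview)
Your proposal is correct and follows the same overall architecture as the paper's proof: build $\Omega_{T,\eps}$ as the intersection of three good sets on which the three hypotheses of Proposition~\ref{prop:energy:v} are verified, and then invoke that proposition. The implementations differ in two minor but noteworthy places. For the $K|I|^\beta$ bound \eqref{eq:strich:small}, the paper partitions $[0,T]$ into intervals $I_k$ of a fixed small length $t_*$ and applies Proposition~\ref{prop:local-in-time:strich} on each, choosing $t_*$ small so that the sum of the exceptional probabilities is below $\eps/3$; you instead control a higher $L^q_tL^{2p}_x$ norm once and extract the $|I|^\beta$ factor by H\"older, which has the advantage of delivering the bound for \emph{every} subinterval $I\subset[0,T]$ automatically. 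For the tail decay \eqref{eq:tail:decay}, the paper bounds $\|\lan\nabla\ran^{\alpha} z^\omega\|_{L^{\frac{2p}{p-3}}_TL^{2p}_x}$ for some $\alpha\in(0,s)$ in a single probabilistic estimate, from which the $N^{-\alpha}$ decay follows \emph{deterministically} for all $N$; you apply the $s=0$ probabilistic Strichartz estimate to each $f-f_N$ separately and sum the Gaussian tails over dyadic $N$, which also works but requires inserting a large multiplicative constant in front of $N^{-s/2}$ (equivalently, enlarging $C_1(T)$) so that the tail sum is below $\eps/3$. Both variants are legitimate; the paper's treatment of (iii) is a bit more economical, while your treatment of (i) is arguably cleaner.
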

\begin{proof} We also argue in the same way as in \cite{OhPo}. We first construct a set $\Omega_1$, over which the assumption $(iii)$ in Proposition \ref{prop:energy:v} holds for all dyadic $N$.
  As usual, $z^{\omega}=S(t)(u_0^{\omega},u_1^{\omega})$ and $z_N^{\omega}=P_{\leq N}S(t)(u^{\omega}_0,u^{\omega}_1)$. Taking $\alpha\in(0,s)$, set
  \begin{equation*}
    M=M(T,\eps,\|(u_0,u_1)\|_{\cH^{\alpha}})\sim T^{\frac{p-3}{p}}\Big(\log\frac{1}{\eps}\Big)^{\frac{1}{2}}\|(u_0,u_1)\|_{\cH^{\alpha}}.
  \end{equation*}
  Then denote
  \begin{equation*}
    \Omega_1:=\Omega_1(T,\eps):=\{\omega\in\Omega: \|\lan\nabla\ran^{\alpha}z^{\omega}\|_{L^{\frac{2p}{p-3}}_TL^{2p}_x}\leq M\}.
  \end{equation*}
  By Lemma \ref{prop:local-in-time:strich} (ii) that
  \begin{equation}
    \label{eq:proba:omega1}
    \bP(\Omega^c_1)<\frac{\eps}{3}.
  \end{equation}
  Moreover, for each $\omega\in\Omega_1$, we have for any $N\geq 1$
  \begin{equation}
    \label{eq:bound:remainder:N}
    \|z^{\omega}-z^{\omega}_N\|_{L^{\frac{2p}{p-3}}_T{L^{2p}_x}}\leq N^{-\alpha/2}\|\lan\nabla\ran^{\alpha}z^{\omega}\|_{L^{\frac{2p}{p-3}}_TL^{2p}_x}\leq MN^{-\alpha/2}.
  \end{equation}

  Next, we are going to construct another subset $\Omega_2\subset\Omega$, over which the assumption $(ii)$ in Proposition \ref{prop:energy:v} holds for all dyadic $N$. Given any dyadic $N\geq 1$, apply Proposition \ref{lemma:key:energy}, we can construct $\Omega_2(N):=\tilde{\Omega}_{N,T,\eps}$ with
  \begin{equation}
    \label{eq:proba:Omega2}
    \bP(\Omega_2^c)<\frac{\eps}{3}
  \end{equation}
  such that
  \begin{equation}
    \label{eq:energy:bound:Omega2}
    \sup_{t\in[0,T]}\|(v^{\omega}_N(t),\ptd_tv^{\omega}_N(t))\|_{\cH^1}<C_0(T,\eps,\|(u_0,u_1)\|_{\cH^s})<\infty
  \end{equation}
  for each $\omega\in\Omega_2(N)$. The main point here is that the constant $C_0=C_0(T,\eps,\|(u_0,u_1)\|_{\cH^s})$ can be chosen independent of $N$.

  Lastly, fix $K=\|(u_0,u_1)\|_{\cH^0}$ and $2\beta=\frac{p-3}{2p}$ in the following. Let $t_{\ast}>0$ be a small number and be chosen later. By writing $[0,T]=\cup^{[T/t_{\ast}]}_{k=0}I_k$ with $I_k=[kt_{\ast},(k+1)t_{\ast}]\cap[0,T]$, define $\Omega_3$ by
  \begin{equation}
    \label{eq:Omega3}
    \Omega_3:=\bigcup_{k=0}^{[\frac{T}{t_{\ast}}]}\Big\{\omega\in \Omega:\|z^{\omega}\|_{L^{\frac{2p}{p-3}}_{I_k}L^{2p}_x}\leq K|I_k|^{\beta}\Big\}.
  \end{equation}
  Then by Lemma \ref{prop:local-in-time:strich} with $|I_k|\leq t_{\ast}$, we have
  \begin{equation*}
    \bP(\Omega^c_3)\leq \sum_{k=0}^{[T/t_{\ast}]}\bP\Big(\|z^{\omega}\|_{L^{\frac{2p}{p-3}}_{I_k}L^{2p}_x}>K|I_k|^{\beta}\Big)\leq \exp\Big(-\frac{c}{T^2t_{\ast}^{\beta}}\Big).
  \end{equation*}
  By taking $t_{\ast}$ even smaller if necessary, we have
  \begin{equation*}
    \bP(\Omega^c_3)\leq \frac{T}{t_{\ast}}t_{\ast}\exp\Big(-\frac{c}{2T^2t_{\ast}^{\frac{p-3}{2p}}}\Big)=T\exp\Big(-\frac{c}{2T^2t_{\ast}^{\beta}}\Big).
  \end{equation*}
  Hence, by choosing $t_{\ast}=t_{\ast}(T,\eps)$ sufficiently small, we have
  \begin{equation}
    \label{eq:proba:Omega3}
    \bP(\Omega^c_3)<\frac{\eps}{3}.
  \end{equation}

  Let $\Omega_{T,\eps}:=\Omega_1\cap\Omega_2(N_0)\cap\Omega_3$, where $N_0$ is to be chosen later. Then from (\ref{eq:proba:omega1}),~(\ref{eq:proba:Omega2}) and (\ref{eq:proba:Omega3}), we have that
  \begin{equation*}
    \bP(\Omega^c_{T,\eps})<\eps.
  \end{equation*}
  By choosing $N_0=N_0(T,\eps,\|(u_0,u_1)\|_{\cH^s})\gg1$, by Proposition \ref{prop:energy:v}, we have that there exists a solution $v^{\omega}$ to Equation (\ref{eq:wave:0ini}) on $[0,T]$ for each $\omega\in\Omega_{T,\eps}$. Hence for $\omega\in\Omega_{T,\eps}$, there exists a solution $u^{\omega}=z^{\omega}+v^{\omega}$ to Equation (\ref{eq:wave}) on $[0,T]$. Moreover, there holds the estimate:
  \begin{equation*}
    \sup_{t\in[0,T]}\|(v^{\omega}(t),\ptd_tv^{\omega}(t))\|_{\cH^1(\bT^3)}<2C_0(T,\eps,\|(u_0,u_1)\|_{\cH^s(\bT^3)})<\infty,
  \end{equation*}
  for each $\omega\in\Omega$
\end{proof}

\bibliographystyle{plain}
\bibliography{pwave}

\end{document}